\newtheorem{theorem}{Theorem}[section]
\newtheorem{lemma}{Lemma}[section]
\newtheorem{conjecture}{Conjecture}
\newcommand{\definition}
      {\medskip\noindent {\bf Definition\hspace{1mm}}}
\newenvironment{proof}
      {\medskip\noindent{\bf Proof:}\hspace{1mm}}
      {\hfill$\Box$\medskip}
\def\Ddots{\mathinner{\mkern1mu\raise\p@
\vbox{\kern7\p@\hbox{.}}\mkern2mu
\raise4\p@\hbox{.}\mkern2mu\raise7\p@\hbox{.}\mkern1mu}}
\def\d{\delta}
\def\e{\epsilon}
\def\mG{\mathcal{G}}
\def\mH{\mathcal{H}}
\title{\vspace{-0.7cm}Erd\H{o}s-Hajnal-type theorems in hypergraphs}
\author{David Conlon\thanks{St John's College, Cambridge CB2 1TP, United
Kingdom.
Email: {\tt
d.conlon@dpmms.cam.ac.uk}. Research supported by a Royal Society University
Research Fellowship.} \and Jacob Fox\thanks{Department of
Mathematics, MIT, Cambridge, MA 02139-4307. Email: {\tt
fox@math.mit.edu}. Research supported by a Simons Fellowship.} \and
Benny Sudakov\thanks{Department of Mathematics,
UCLA,  Los Angeles, CA 90095. Email: {\tt bsudakov@math.ucla.edu}. Research
supported in part by NSF CAREER award DMS-0812005 and by a
USA-Israeli BSF grant.}}
\date{}
\begin{document}
\maketitle

\begin{abstract}
The Erd\H{o}s-Hajnal conjecture states that if a graph on $n$ vertices is
$H$-free, that is, it does not contain an induced copy of a given graph $H$,
then it must contain either a clique or an independent set of size $n^{\d(H)}$,
where $\d(H) > 0$ depends only on the graph $H$. Except for a few special
cases, this conjecture remains wide open. However, it is known that a $H$-free
graph must contain a complete or empty bipartite graph with parts of polynomial
size.

We prove an analogue of this result for $3$-uniform hypergraphs, showing that
if a $3$-uniform hypergraph on $n$ vertices is $\mH$-free, for any given $\mH$,
then it must contain a complete or empty tripartite subgraph with parts of
order $c (\log n)^{\frac{1}{2} + \d(\mH)}$, where $\d(\mH) > 0$ depends only on
$\mH$. This improves on the bound of $c (\log n)^{\frac{1}{2}}$, which holds in
all $3$-uniform hypergraphs, and, up to the value of the constant $\d(\mH)$, is
best possible.

We also prove that, for $k \geq 4$, no analogue of the standard
Erd\H{o}s-Hajnal conjecture can hold in $k$-uniform hypergraphs. That is, there
are $k$-uniform hypergraphs $\mH$ and sequences of $\mH$-free hypergraphs which
do not contain cliques or independent sets of size appreciably larger than one
would normally expect.
\end{abstract}

\section{Introduction}

One version of Ramsey's theorem states that, for any $s$ and $t$, there is a
natural number $n$ such that, in any $2$-colouring of the edges of the complete
graph $K_n$, there
is either a red $K_s$ or a blue $K_t$. The smallest number $n$ for which this
holds is the Ramsey number $r(s,t)$. A classical result of Erd\H{o}s and
Szekeres \cite{ES35} implies that, for $s \geq t$, we have
$$r(s, t) \leq \binom{s+t-2}{t-1} \leq s^{t-1}.$$
Phrasing this differently, we see that every graph on $n$ vertices which
contains no copy of $K_t$ must have an independent set of size $n^{1/(t-1)}$.

We call a graph $H$-free if it contains no induced copy of a given graph $H$.
The famous conjecture of Erd\H{o}s and Hajnal \cite{EH89} states that if a
graph on $n$ vertices is $H$-free then it must contain either a clique or an
independent set of size $n^{\d(H)}$, where $\d(H) > 0$ depends only on the
graph $H$. The observations of the last paragraph imply that this conjecture is
true when $H$ is a complete graph. Suppose now that we know the conjecture to
be true for two graphs $H_1$ and $H_2$. Erd\H{o}s and Hajnal showed that the
conjecture will then also hold for a graph $H$ formed by connecting the
vertices of $H_1$ and $H_2$ by either a complete or an empty bipartite graph.
This easily allows one to prove the conjecture for all graphs with at most $4$
vertices except $P_4$, the path with $3$ edges.

A more general result, due to Alon, Pach and Solymosi \cite{APS01}, says that
if $H_1$ and $H_2$ have the Erd\H{o}s-Hajnal property then so does the graph
$H$ formed by blowing up a vertex of $H_1$ and replacing it with a copy of
$H_2$. The conjecture is also known for some graphs which do not fall naturally
into these inductively defined classes (but which may be added as base cases to
widen the range of these classes). For example, Erd\H{o}s and Hajnal noted that
since graphs with no induced $P_4$ are perfect the conjecture is true for
$P_4$. More recently, Chudnovsky and Safra \cite{CS08} have shown that the
conjecture holds for the bull, a self-complementary graph on $5$ vertices
consisting of a triangle with two pendant edges. The smallest cases that now
remain open are the cycle $C_5$ and the path $P_5$.

For general $H$, Erd\H{o}s and Hajnal proved that if a graph on $n$ vertices is
$H$-free then it must contain a clique or an independent set of size $e^{c(H)
\sqrt{\log n}}$. This is a significant improvement over the bound of $c \log n$
which, by Ramsey's theorem, holds in all graphs, but it is still quite far from
the conjecture. However, as observed in \cite{EHP00}, their method does allow
one to find complete or empty bipartite subgraphs each side of which are of
polynomial size. Recently, Fox and Sudakov \cite{FS09} went one step further by
proving that there is either a complete bipartite graph or an independent set
of polynomial size. For further related results, see \cite{APPRS05, CZ11,
FPT10, FS08, LRSTT10, PR99}.

For $3$-uniform hypergraphs, Erd\H{o}s and Rado \cite{ER52} proved that, in any
$2$-colouring of the edges of the complete graph $K_n^{(3)}$ on $n$ vertices,
there is a monochromatic clique of size $c \log \log n$. Phrased differently,
this says that any $3$-uniform hypergraph on $n$ vertices contains either a
clique or an independent set of size $c \log \log n$. Given the situation for
graphs, it is tempting to conjecture that if a $3$-uniform hypergraph on $n$
vertices is $\mH$-free, for some given $\mH$, then there should be a clique or
an independent set of size much larger than $\log \log n$. We feel, but have
been unable to prove, that for general $\mH$ this may be too much to expect.

However, at least for some excluded hypergraphs, an Erd\H{o}s-Hajnal-type
estimate does appear to hold. Let $r_3(s,t)$ be the smallest number $n$ such
that, in any $2$-colouring of the edges of $K_n^{(3)}$, there is a red
$K_s^{(3)}$ or a blue $K_t^{(3)}$. A recent result of the authors \cite{CFS10},
improving on the work of Erd\H{o}s and Rado, implies that, for $t$ fixed,
\[r_3(s,t) \leq 2^{c_t s^{t-2} \log s}.\]
It follows that any $3$-uniform hypergraph with $n$ vertices which does not
contain a copy of $K_t^{(3)}$ must contain an independent set of size roughly
$\left( \log n / \log \log n\right)^{1/(t-2)}$.

Even this is only progress in a weak sense. It is not yet known whether $\log
\log n$ is the correct order of magnitude for Ramsey's theorem in $3$-uniform
hypergraphs. Indeed, this question has become rather notorious, Erd\H{o}s
offering \$500 dollars for its resolution. The best construction, given by a
random hypergraph, only tells us that there are hypergraphs on $n$ vertices
with no
clique or independent set of size $c (\log n)^{\frac{1}{2}}$. So we cannot say
with certainty whether the estimate for $K_t^{(3)}$-free graphs really does
improve on the general bound.

Given this state of affairs, we follow a different route, suggested by R\"odl and Schacht,
attempting to extend the bipartite counterpart of the Erd\H{o}s-Hajnal theorem
to tripartite $3$-uniform hypergraphs. In any given $3$-uniform hypergraph on
$n$ vertices, one may always find a complete or empty tripartite subgraph with
parts of order at least $c (\log n)^{\frac{1}{2}}$. This follows from a
standard extremal result due to Erd\H{o}s \cite{E64}. We improve this result
for $\mH$-free graphs.

\begin{theorem} \label{TripIntro}
Let $\mH$ be a $3$-uniform hypergraph. Then there exists a constant $\d(\mH) >
0$ such that, for $n$ sufficiently large, any $\mH$-free $3$-uniform hypergraph
on $n$ vertices contains a complete or empty tripartite subgraph each part of
which has order at least $(\log n)^{\frac{1}{2} + \d(\mH)}$.
\end{theorem}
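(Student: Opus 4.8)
The plan is to argue by induction on the number of vertices of $\mH$, producing a suitable $\d(\mH)>0$ in the process. The base cases $|V(\mH)|\le 3$ are immediate: an $\mH$-free hypergraph is then either empty or complete, so any partition of its vertex set into three parts of size $\lfloor n/3\rfloor$ is a complete or empty tripartite subgraph. For the inductive step fix a vertex $v$ of $\mH$, put $\mH'=\mH-v$, and let $L$ be the link graph of $v$ in $\mH$ (a graph on $V(\mH')$). Let $G$ be an $\mH$-free hypergraph on $n$ vertices. If $G$ is $\mH'$-free we are done by the inductive hypothesis, so we may assume $G$ contains an induced copy of $\mH'$ on a set $X$ with $|X|=|V(\mH)|-1$. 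Because $G$ is $\mH$-free, no vertex outside $X$ can play the role of $v$ relative to this copy, so each $u\notin X$ has a \emph{witnessing pair} $\{x,x'\}\subseteq X$ on which the behaviour of $u$ differs from that of $v$ in $\mH$: the triple $\{u,x,x'\}$ is a hyperedge of $G$ exactly when $\{x,x'\}$ is a non-edge of $L$. Pigeonholing over the $\binom{|X|}{2}$ choices of witnessing pair produces a single pair $p$ and a set $W$ with $|W|\ge n/\binom{|V(\mH)|}{2}$ over which $p$ is \emph{homogeneous}: $W$ is contained in the link of $p$, or in its complement.

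The crux is to bootstrap this single homogeneous pair into the structure we want. Iterating the previous step inside $W$, one of two things happens. Either after $O(\log n)$ iterations the current sub-hypergraph becomes $\mH'$-free while still spanning a polynomial number of vertices, in which case the inductive hypothesis applied inside it finishes the proof. Or we accumulate $\Omega(\log n)$ homogeneous pairs sitting inside a nested family of linear-sized sets, and the task is to refine this into three disjoint sets $A,B,C$, each of size $(\log n)^{\frac12+\d(\mH)}$, with either every transversal triple a hyperedge or every transversal triple a non-hyperedge. The tool for this last step is a greedy, dependent-random-choice-type construction that builds $A$, $B$ and $C$ one vertex at a time while maintaining a large ``live'' pool of candidates for the remaining vertices. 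The key quantitative point is that this construction sustains the required $3(\log n)^{\frac12+\d}$ steps precisely when the region it runs in has \emph{all} of its pairwise link densities within $(\log n)^{-1-2\d}$ of $0$ or of $1$ — a condition far stronger than being bounded away from $1/2$, for which Erd\H{o}s's extremal bound $\mathrm{ex}\!\left(n,K^{(3)}_{t,t,t}\right)\le n^{3-1/t^2}$ only yields $t=\Theta\!\left((\log n)^{1/2}\right)$, the trivial exponent.

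Hence the real obstacle is to guarantee that the homogeneous-pair iteration produces a region that is \emph{this} strongly homogeneous, namely of density $1\pm(\log n)^{-\Theta(1)}$ on a set of size at least $(\log n)^{\Theta(1)}$, rather than merely denser than expected — and this is exactly where $\mH$-freeness must be used in an essential, quantitative way, since a merely dense region yields nothing beyond $(\log n)^{1/2}$. Concretely, instead of iterating ``find a homogeneous pair'' one should run a density-weighted version of it, keeping track of how nearly complete (or nearly empty) the relevant pair- and vertex-links are and passing this accumulated control to the greedy construction. Checking that this can be done with only a constant loss $\d(\mH)>0$ in the exponent per level of the induction — so that $\d(\mH)$ stays positive for every fixed $\mH$ — is the main content of the proof; the remaining ingredients (the base cases, the reduction to $\mH'$, and Erd\H{o}s's box theorem giving the baseline $(\log n)^{1/2}$) are routine.
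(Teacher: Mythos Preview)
Your proposal is a sketch with an acknowledged gap, and that gap is essentially the entire theorem. The homogeneous-pair iteration you describe is the direct hypergraph analogue of the Erd\H{o}s-Hajnal inductive argument for graphs, and after $\Theta(\log n)$ rounds it does yield $\Theta(\log n)$ pairs $\{a_i,b_i\}$, each homogeneous toward a common polynomial-sized set $W$. But this is far too weak to extract $K_{s,s,s}^{(3)}$ with $s=(\log n)^{1/2+\delta}$: a complete tripartite hypergraph on $A,B,C$ requires \emph{every} cross-pair $(a,b)\in A\times B$ to have $C$ in its link, whereas your iteration only controls the ``diagonal'' pairs $\{a_i,b_i\}$. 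You recognise this and gesture at a ``density-weighted version'' that would track how nearly complete the links are, but you give no mechanism by which $\mH$-freeness forces the off-diagonal links to be $1-(\log n)^{-\Theta(1)}$ dense. Your closing sentence, that ``checking that this can be done \ldots\ is the main content of the proof,'' is accurate; that content is absent.

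The paper's argument is structurally different and does not use induction on $|V(\mH)|$. It calls $\mG$ tri-$(\e,\rho)$-dense if, for every triple of bipartite graphs $G_{12},G_{23},G_{31}$ spanning at least $\e n^3$ triangles, a $\rho$-fraction of those triangles are hyperedges of $\mG$. An embedding lemma (Lemma~\ref{Embedding}) shows that if both $\mG$ and $\overline{\mG}$ are tri-$(\e,\rho)$-dense with $\e=\rho^{O(t^2)}$, then $\mG$ contains an induced $\mH$. If instead tri-density fails in $\overline{\mG}$, one has a tripartite graph with $\ge \e n^3$ triangles of which a $(1-\rho)$-fraction lie in $\mG$; a separate extraction lemma (Lemma~\ref{NotTriDense}), proved by iterated K\H{o}v\'ari-S\'os-Tur\'an arguments, then pulls out $K_{s,s,s}^{(3)}$ directly. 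Taking $\rho=(\log n)^{-\Theta(1/t^2)}$ balances the two halves. The decisive idea, with no counterpart in your outline, is that tri-density is measured against arbitrary tripartite \emph{graphs} and their triangles rather than against triples of vertex sets; as the paper notes, the weaker vertex-set notion admits $K_4^{(3)}$-free examples, so no embedding lemma can hold for it.
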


This improves upon a result of R\"odl and Schacht \cite{R10}, who used the
regularity method for hypergraphs to show that the size of the largest complete
or empty tripartite subgraph grows arbitrarily faster than the function $(\log
n)^{\frac{1}{2}}$. However, because their results depend upon the regularity
lemma, their result does not provide good bounds on the multiplying function.

Our result, on the other hand, is not far from best possible, since, for many
$\mH$, one cannot do better than $c \log n$. To see this, consider the random
graph on the vertex set $\{1,2, \dots, n\}$ where each edge is chosen with
probability $\frac{1}{2}$. For $c$ sufficiently large, with high probability,
this graph contains no complete or empty bipartite graph with parts of order $c
\log n$. Fix such a graph and call it $G_n$. Let $\mG_n$ be the $3$-uniform
hypergraph on the same vertex set whose edge set consists of all those triples
$(i_1, i_2, i_3)$ with $i_1 < i_2 < i_3$ such that $(i_1, i_2)$ is an edge in
$G_n$.

It is easy to see that $\mG_n$ contains no complete or empty tripartite
subgraph
with sets of size $c \log n$. Suppose otherwise and let $U, V$ and $W$ be
subsets of size $c \log n$ which define such a tripartite graph. Without loss
of generality, assume that the largest vertex $w$, in the ordering inherited
from the integers, lies in $W$. Then, by construction, there must be a complete
or empty bipartite subgraph in $G_n$ between $U$ and $V$, a contradiction. Now,
for any subset $X$ of the vertices of $\mG_n$, let $x_1$ and $x_2$ be the two
smallest vertices in $X$. Then, again by construction, for every $x \in
X\char92\{x_1, x_2\}$, either all edges of the form $(x_1, x_2, x)$ are in
$\mG_n$ or none of them are. Choose a small hypergraph $\mH$ containing no
vertex
pair $(x_1, x_2)$ with this property. For example, one may take $\mH$ to be a
tight cycle on five vertices, that is, with vertices $\{1, 2, 3, 4, 5\}$ and
edge set $\{123, 234, 345, 451, 512\}$. Then $\mG_n$ is $\mH$-free. Since it
also
contains no tripartite subgraph of size $c \log n$, this completes our claim.

The proof of Theorem \ref{TripIntro} relies upon a new embedding lemma which
says that if the edges of $\mG$ are fairly well-distributed, in the sense that
in any graph containing many triangles a positive proportion of these triangles
form edges both of $\mG$ and of its complement $\overline{\mG}$, then one may embed an
induced copy of any particular small hypergraph $\mH$. If the hypergraph is not
well-distributed in the sense described above, then it turns out that it must
contain a complete or empty tripartite subgraph which is much larger than one
would normally expect.

Despite this description being a reasonable one for any uniformity, the proof
does not extend to the $k$-uniform case for any $k \geq 4$. We will say more
about this in the concluding remarks. For now, we will return to considering
the analogue of the usual Erd\H{o}s-Hajnal problem in hypergraphs. That is,
given a $\mH$-free $k$-uniform hypergraph, how large of a clique or independent
set must it contain?

Let $r_k(\ell)$ be the diagonal Ramsey function, that is, the minimum $n$ such
that in any $2$-colouring of the edges of $K_n^{(k)}$ there is a monochromatic
copy of $K_\ell^{(k)}$. The tower function $t_k(x)$ is defined by $t_1(x)=x$
and $t_{i+1}(x)=2^{t_i(x)}$. A result of Erd\H{o}s and Rado implies that, for
$k \geq 4$,
\[r_k(\ell) \leq t_{k-2} ((r_3(c_k \ell))^3).\]
On the other hand, a result of Erd\H{o}s and Hajnal (see \cite{CFS1103,
GRS90}), referred to as the stepping-up lemma, allows one, for $k \geq 4$, to
take a colouring of the $(k-1)$-uniform hypergraph on $n$ vertices containing
no monochromatic cliques of size $\ell$ and to show that there is a colouring
of the $k$-uniform hypergraph on $2^n$ vertices containing no monochromatic
clique of size $2\ell + k - 5$. In particular, this can be used to show that,
for
$\ell$ sufficiently large,
\[r_k(\ell) \geq t_{k-2} (r_3(c'_k \ell)).\]
Therefore, once the asymptotic behaviour of $r_3(\ell)$ is understood, so is
that of $r_k(\ell)$. To be more precise, let $r_k^{-1}$ be the inverse function
of $r_k$. Restating the results quoted above and using the fact that
$r_3(\ell)$ is at least exponential in $\ell$ tells us that, for $k \geq 3$ and
$n$ sufficiently large depending on $k$, $r_k^{-1} (n)$ has upper and lower
bounds of the form
\[a'_k r_3^{-1} (\log_{(k-3)} n) \leq r^{-1}_{k} (n) \leq a_k r_3^{-1}
(\log_{(k-3)} n),\]
where $\log_{(0)} (x) = x$ and, for $i \geq 1$, $\log_{(i)}(x) = \log
(\log_{(i-1)} (x))$ is the iterated logarithm.

For an Erd\H{o}s-Hajnal-type theorem to hold, we would therefore need that
whenever a hypergraph on $n$ vertices is $\mH$-free there is a clique or
independent set of size much larger than $r_3^{-1} (\log_{(k-3)} n)$. We will
disprove this by showing that there are already simple examples of hypergraphs
$\mH$ which are not contained in step-up colourings. This implies the following
theorem.

\begin{theorem} \label{StepUpIntro}
For $k \geq 4$, there exists a constant $c_k$, a $k$-uniform hypergraph $\mH$
and a sequence $\mG_n$ of $\mH$-free $k$-uniform hypergraphs with $n$ vertices
such that the size of the largest clique or independent set in $\mG_n$ is at
most $c_k r_k^{-1}(n)$.
\end{theorem}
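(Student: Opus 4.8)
\medskip
\noindent\textbf{Proof proposal.}
The plan is to read the required examples off the stepping-up construction of Erd\H{o}s and Hajnal. Fix $k\geq 4$ and start from a $2$-colouring of $K_{r_3(\ell)-1}^{(3)}$ with no monochromatic $K_\ell^{(3)}$; applying the stepping-up lemma $k-3$ times produces, for $N=t_{k-2}(r_3(\ell)-1)$, a $2$-colouring $\chi$ of $K_N^{(k)}$ whose largest monochromatic clique has size at most $c_k\ell$. Declaring one colour class of $\chi$ to be the edge set of a $k$-uniform hypergraph $\mG_N$, the other class is $\overline{\mG_N}$, a clique of $\mG_N$ is a $\chi$-monochromatic clique in the first colour, and an independent set is one in the second; hence the largest clique or independent set of $\mG_N$ has size at most $c_k\ell$. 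Since $N$ and $\ell$ are related exactly as in the displayed lower bound $r_k(\ell)\geq t_{k-2}(r_3(c_k'\ell))$, we get $\ell\leq c_k'' \, r_k^{-1}(N)$, and interpolating to arbitrary $n$ by restricting $\mG_N$ to $n$ vertices costs only another constant factor. Thus everything reduces to exhibiting a single small $k$-uniform hypergraph $\mH$ that occurs as an induced subhypergraph of no iterated step-up colouring.

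This is the heart of the matter, and it exploits the rigidity of the step-up colouring. Recall its form: the vertex set of the $k$-uniform colouring is $\{0,1\}^m$, ordered as integers; for $u\neq v$ write $\delta(u,v)$ for the largest coordinate on which $u$ and $v$ differ; for an edge $\{v_1<\cdots<v_k\}$ one forms the difference sequence $\delta_i=\delta(v_i,v_{i+1})$, $1\leq i\leq k-1$, and the colour of the edge is dictated by whether this sequence is monotone---in which case its entries form a $(k-1)$-set, coloured by the $(k-1)$-uniform colouring on $[m]$ one level down---or has a local extremum, in which case the parity and location of a prescribed extremum fix the colour. Two elementary facts govern the combinatorics: consecutive differences are never equal, $\delta_i\neq\delta_{i+1}$, and $\delta(v_i,v_j)=\max_{i\leq t<j}\delta_t$ for $i<j$, with the maximum attained uniquely. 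Using these I would show that on any bounded set of vertices---$k+2$ ought to be enough---the colours of the $\binom{k+2}{k}$ spanning edges cannot realise every pattern: overlapping edges force, through the two facts above, agreements or disagreements among their colours, and a pattern violating all such forced relations simply cannot appear.

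The concrete step is then to name the obstruction. For $k=4$ I would search among small hypergraphs on five or six vertices---morally a $4$-uniform cousin of the tight $5$-cycle used in the remark after Theorem~\ref{TripIntro}---for one whose prescribed pattern of edges and non-edges contradicts the constraints above however its vertices are placed and ordered in $\{0,1\}^m$. For general $k$ I would either run the same analysis to produce an $\mH_k$ for each $k$ by the identical principle, or set up a lifting argument: an induced copy of a suitably chosen $\mH_k$ in the $k$-uniform step-up of a $(k-1)$-uniform colouring would, via its difference sequences, force an induced copy of $\mH_{k-1}$ in that lower colouring, so that one base obstruction propagates through all uniformities. Either way, one finally checks that the iterated step-up colourings constructed above are genuinely $\mH$-free, which completes the argument.

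The main obstacle I anticipate is precisely the case analysis certifying that $\mH$ is excluded. The step-up rule branches on whether the difference sequence is monotone and, if not, on the parity and position of the relevant extremum, so the list of admissible colour patterns on a small vertex set has several cases; obtaining a description of this list clean enough to read off a single forbidden $\mH$ valid for all $k\geq 4$---rather than an unilluminating case-by-case choice---is where the real effort lies. By contrast the Ramsey bookkeeping, namely iterating the stepping-up lemma and matching the resulting tower against $r_k^{-1}$, is routine given the estimates already quoted.
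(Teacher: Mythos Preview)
Your framework is the same as the paper's: build $\mG_n$ as (one colour class of) an iterated step-up colouring, so the Ramsey bookkeeping goes through as you describe, and the whole task reduces to exhibiting a fixed $k$-uniform $\mH$ that never appears in a step-up colouring. Where you diverge is in how you propose to find that $\mH$. You plan an explicit search on $k+2$ (or so) vertices, tracking the case split of the step-up rule and hoping to read off a concrete obstruction, perhaps by a lifting argument through uniformities; you correctly flag this case analysis as the main obstacle, and indeed you have not carried it out.

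The paper bypasses this difficulty entirely with a counting argument. The key observation is that, for vertices $\epsilon_1<\cdots<\epsilon_h$ in a step-up colouring, the colours of all $\binom{h}{k}$ edges are completely determined by (i) the total preorder on the $h-1$ consecutive differences $\delta_i=\delta(\epsilon_i,\epsilon_{i+1})$, and (ii) the $(k-1)$-uniform colouring of the $\delta_i$. There are at most $(h-1)^{h-1}$ total preorders and $2^{\binom{h-1}{k-1}}$ underlying colourings, so at most $(h-1)^{h-1}2^{\binom{h-1}{k-1}}$ coloured patterns on $h$ vertices can occur in any step-up colouring. Since the number of $2$-coloured complete $k$-uniform hypergraphs on $h$ vertices is at least $\frac{1}{h!}2^{\binom{h}{k}}$, for $h\geq k+4$ the latter exceeds the former, and some pattern $\mH$ is forbidden. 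This is non-constructive but clean, and it works uniformly in $k$ without any lifting or case analysis. Your route is not wrong in principle, but the counting argument is the missing idea that turns the hard step into a one-paragraph computation; note also that the paper applies step-up just once from an extremal $(k-1)$-uniform colouring rather than iterating from uniformity $3$, which is enough because the forbidden $\mH$ is absent from \emph{every} step-up colouring regardless of what lies beneath.
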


In the next two sections, we will prove Theorem \ref{TripIntro}. In section
\ref{StepUp}, we will study properties of stepping-up lemmas in order to prove
Theorem \ref{StepUpIntro}. We conclude with several remarks in section
\ref{Concluding}. Throughout the paper, we systematically omit floor and
ceiling signs whenever they are not crucial for the sake of
clarity of presentation. We also do not make any serious attempt
to optimize absolute constants in our statements and proofs. All logarithms,
unless otherwise stated, are taken to the base $e$. As is quite customary in
Ramsey theory, our approach to the problems in this paper is asymptotic in
nature. We thus assume that the underlying parameter (normally the order of the
graph/hypergraph) is sufficiently large whenever necessary.

\section{The embedding lemma} \label{EmbedSec}

The {\it edge density} $d(X,Y)$ between two disjoint vertex subsets $X,Y$ of a
graph $G$ is the fraction of pairs $(x,y) \in X \times Y$ that are edges of
$G$. That is,
$d(X,Y)=\frac{e(X,Y)}{|X||Y|}$, where $e(X,Y)$ is the number of edges with one
endpoint in $X$ and the other in $Y$.

\definition A graph $G$ on $n$ vertices is said to be bi-$(\e, \rho)$-dense if,
for all $X$ and $Y$ with $X \cap Y = \emptyset$ and $|X|, |Y| \geq \e n$, the
density of edges $d(X,Y)$ between $X$ and $Y$ satisfies $d(X, Y) \geq \rho$.

\vspace{3mm}
This definition, introduced by Graham, R\"odl and Ruci\'nski \cite{GRR00}, has
proved very useful in Ramsey theory (see, for example, \cite{C11, CFS11, FS209,
GRR01, S11}). The reason for this is that there is an embedding lemma which
says that for any given $H$ and $\rho$ there exists an $\e$ such that if $G$
is a sufficiently large bi-$(\e, \rho)$-dense graph then $G$ contains a copy of
$H$. More generally, if both $G$ and its complement $\overline{G}$ are bi-$(\e,
\rho)$-dense, for $\e$
sufficiently small, then $G$ will contain an induced copy of $H$. Moreover, the
dependency of $\e$ on $\rho$ is polynomial, that is, $\e = \rho^{c(H)}$ (up to
a constant factor, which we
ignore for convenience). To see why this is helpful, by taking $\rho =
n^{-1/2c(H)}$, we have that
if $G$ and $\overline{G}$ are both bi-$(n^{-1/2}, \rho)$-dense, then there is
an induced copy of $H$ in $G$. Suppose, on the other hand, that either $G$ or
$\overline{G}$ is not bi-$(n^{-1/2}, \rho)$-dense. Without loss of generality,
we may assume that $G$ is the bad graph. This implies that there are two
disjoint subsets $X$ and $Y$ with $|X|, |Y| \geq n^{1/2}$ and density less than
$\rho$ in $G$. In the complement $\overline{G}$, this says that the density
between $X$ and $Y$ is at least $1 - \rho$. By the choice of $\rho$, this in
turn implies that there is an empty bipartite graph with parts each of
polynomial size.

We would like to imitate this argument for $3$-uniform hypergraphs. There are
several steps. Firstly, we need to find the correct notion of tri-density. We
then need to show that this yields an appropriate embedding lemma. Finally, we
need to show that if a hypergraph is not tri-dense, then it contains a large
empty tripartite subgraph. We will deal with the first two of these in this
section.

Roughly speaking, a graph is bi-dense if, between any two large vertex sets,
there are many edges. It is tempting to define tri-density in a similar
fashion, by saying that between any three large vertex sets there should be
many edges. Unfortunately, there are examples of $3$-uniform hypergraphs which
are tri-dense in this sense but do not contain any copies of $K_4^{(3)}$ or
even $K_4^{(3)}$ with one edge removed (see, for example, \cite{KNRS10}).
Instead, it is necessary to define tri-density with respect to edge sets. This
is similar to the sort of quasirandomness which arises in the study of
hypergraph regularity \cite{G06, G07, NRS06, RS04}. Formally, we have the
following definition.

\definition A $3$-uniform hypergraph $\mG$ on $n$ vertices is said to be
tri-$(\e, \rho)$-dense if for any three disjoint vertex subsets $V_1, V_2$ and
$V_3$ and any triple of graphs $G_{12}, G_{23}$ and $G_{31}$, with $G_{ij}$
between $V_i$ and $V_j$, for which there are at least $\e n^3$ triangles with
one edge in each of the $G_{ij}$, a $\rho$-proportion of these triangles form
edges in $\mG$.

\vspace{3mm}
We now prove an embedding lemma to complement this definition. We will need a
notion of bi-density defined for bipartite graphs.

\definition A bipartite graph $G$ between sets $U$ and $V$ is said to be
bi-$(\e, \rho)$-dense if, for all $X$ and $Y$ with $X \subseteq U$, $Y
\subseteq V$ and $|X| \geq \e |U|$, $|Y| \geq \e |V|$, the density of edges
$d(X,Y)$ between $X$ and $Y$ satisfies $d(X, Y) \geq \rho$.

\begin{lemma} \label{Embedding}
Let $\mH$ be a complete $3$-uniform hypergraph on $t \geq 3$ vertices $v_1,
\dots,
v_t$, where each edge has been assigned a colour from the set $\{1, 2, \dots,
\ell\}$, and let $\rho > 0$ be a positive constant. Let $\e = (2t)^{-10}
\rho^{3 t^2}$. Then, if $\mG$ is an $\ell$-coloured $3$-uniform hypergraph on
$n \geq (2t)^{10} \rho^{-3 t^2}$ vertices which is tri-$(\e, \rho)$-dense in
each of the $\ell$ colours, $\mG$ contains a copy of $\mH$.
\end{lemma}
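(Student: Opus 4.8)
The plan is to build the copy of $\mH$ greedily, one vertex at a time, while maintaining a large ``candidate set'' for every vertex not yet embedded, so that the partially embedded hypergraph already has all the right induced edges/colours. This is the standard dependent-random-choice-style argument adapted to the tri-density setting, but the crucial twist is that the hypotheses of tri-$(\e,\rho)$-density are stated in terms of \emph{auxiliary bipartite graphs carrying many triangles}, not directly in terms of vertex sets; so at each step one must carry along not just candidate vertex sets but also bi-dense bipartite graphs between them, and argue that these bipartite graphs still support $\Omega(n^3)$ triangles so that tri-density can be applied.

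First I would set up the iteration. Suppose $v_1,\dots,v_s$ have already been embedded as vertices $w_1,\dots,w_s$, and that for every unembedded $v_j$ we have a candidate set $C_j$ of size $\ge \rho^{c_s} n$ for an appropriate slowly-growing exponent $c_s$, with the property that for every $i\le s$, every vertex of $C_j$ forms, together with $w_i$ and (eventually) the chosen representative of any other unembedded vertex, an edge of the correct colour — more precisely, I would maintain that between each pair $C_j,C_{j'}$ of surviving candidate sets there is a bi-$(\e',\rho)$-dense bipartite graph $G_{jj'}$ (in each relevant colour direction) recording which pairs are still ``good'', and for each embedded $w_i$ a similar record of which pairs $(x,y)\in C_j\times C_{j'}$ have $w_ixy$ the right colour. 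To embed $v_{s+1}$: pick the colour pattern that $v_{s+1}$ must have with $v_1,\dots,v_s$ and with the other unembedded vertices; the constraints involving the already-embedded $w_i$ just restrict us to a sub-bipartite-graph structure, and the constraints involving pairs of unembedded vertices are handled by invoking tri-$(\e,\rho)$-density on the triple $(C_{s+1},C_j,C_{j'})$ with the bipartite graphs currently in force. Since those bipartite graphs are bi-dense with density parameter bounded below by a power of $\rho$, a short counting lemma shows they carry at least $\e n^3$ triangles (this is where $\e=(2t)^{-10}\rho^{3t^2}$ is calibrated), so a $\rho$-fraction of the triangles lie in $\mG$ in the desired colour; averaging, a typical vertex of $C_{s+1}$ extends the right colour to a $\rho$-fraction of the good pairs across many pairs $j,j'$ simultaneously. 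Delete the vertices of $C_{s+1}$ that behave badly for too many pairs, pick $w_{s+1}$ from the survivors, update all candidate sets and bipartite graphs, losing at most a factor polynomial in $\rho$ at each of the $\binom{t}{2}$-or-so updates.

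The bookkeeping of exponents is routine: there are fewer than $t$ steps, at each step the candidate sizes and bi-density parameters degrade by a factor that is a fixed power (at most $\rho^{O(t)}$) of $\rho$, so after all $t$ steps everything is still bounded below by $\rho^{O(t^2)}$ times $n$, which is $\ge 1$ precisely because $n\ge (2t)^{10}\rho^{-3t^2}$ and $\e$ was chosen $=(2t)^{-10}\rho^{3t^2}$. Cleaning up constants and checking the inequalities go through with the stated $\e$ and $n$ is the only genuinely computational part.

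The main obstacle I anticipate is the triangle-counting step: one must show that bi-$(\e',\rho)$-density of the three bipartite graphs $G_{12},G_{23},G_{31}$ between candidate sets of size $\ge\e' n$ forces at least $\e n^3$ triangles with one edge in each $G_{ij}$ — i.e.\ a genuine (bipartite) triangle counting lemma, and one must verify that this property is \emph{preserved} under the successive restrictions imposed when embedding each new vertex (restricting to the neighbourhood-pattern of $w_{s+1}$ shrinks the sets by a $\rho$-factor but, crucially, one needs the restricted bipartite graphs still to be bi-dense with a slightly worse parameter). Getting the quantitative dependence here to close the recursion cleanly — so that after $t$ rounds of shrinking one still beats the threshold built into $\e$ — is the delicate point; everything else is a fairly mechanical greedy embedding.
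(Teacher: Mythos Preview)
Your overall scheme is exactly the paper's: split into $t$ parts, embed greedily, and at stage $i$ carry not only candidate sets $U_j^i$ but also bi-$(\e_i,\rho^i)$-dense bipartite graphs $G_{jk}^i$ between them, together with the condition that every edge of $G_{jk}^i$ already has the correct colour with each previously embedded vertex. So the architecture is right.

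However, the step you flag as ``the delicate point'' is not merely quantitative bookkeeping; it hides the one genuine idea in the proof, and your description of that step would not work as written. Applying tri-$(\e,\rho)$-density once to $(C_{s+1},C_j,C_{j'})$ with the current bipartite graphs and then averaging over $w\in C_{s+1}$ only shows that for most $w$ the restricted graph $H_{jk}(w)$ (pairs $(x,y)$ with $wxy$ the right colour) has \emph{density} $\ge\rho$; it does not give bi-density, and without bi-density the induction hypothesis cannot be reinstated. The paper's fix is to argue by contradiction: if the set $W_{jk}^{i+1}$ of $w$ for which $H_{jk}(w)$ fails bi-$(\e_{i+1},\rho^{i+1})$-density were large, then each such $w$ comes with witness sets $Y_j(w),Y_k(w)$ of size $\ge\e_{i+1}|U_j^i(w)|$ on which $H_{jk}(w)$ has density $<\rho^{i+1}$. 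One then builds \emph{new} bipartite graphs $J_{i+1,j},J_{i+1,k}$ by joining each $w$ exactly to its witness sets, and applies tri-density to the triple $(J_{i+1,j},J_{i+1,k},G_{jk}^i)$. Bi-density of $G_{jk}^i$ on the witness sets guarantees $\ge\e n^3$ triangles; tri-density forces $\ge\rho^{i+1}\sum_w|Y_j(w)||Y_k(w)|$ of them to be correctly coloured; but by construction of the witnesses there are strictly fewer, a contradiction. This witness-set-to-auxiliary-bipartite-graph manoeuvre is the missing ingredient; once you insert it, the rest of your outline (and your exponent accounting) goes through exactly as in the paper.
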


\begin{proof}
Let the vertices of $\mH$ be $v_1, v_2, \dots, v_t$ and let $\chi:
\binom{V(\mH)}{3} \rightarrow \{1, 2, \dots, \ell\}$ be the colouring of the
edges of $\mH$. Split the vertex set $V(\mG)$ into $t$ vertex sets $U_1, \dots,
U_t$ each of size $n/t$. We will embed the graph $\mH$ one vertex at a time,
embedding $v_i$ into $f(v_i) \in U_i$. We will prove by induction on $i$ that
when vertices $v_1, v_2, \dots, v_i$ have been embedded, there are vertex sets
$U_j^i$, for $j > i$, and graphs $G_{jk}^i$, for $i < j < k \leq t$,  such that
the following conditions hold.
\begin{enumerate}
\item
$|U_j^i| \geq c_i n$ with $c_i = \frac{1}{t} \rho^{i^2/2}$;

\item
$G_{jk}^i$ is a graph between $U_j^i$ and $U_k^i$ which is bi-$(\e_i,
\rho^i)$-dense with $\e_i = \frac{1}{2t^2} \rho^{t^2 - i^2/2}$;

\item
for every $h \leq i$, every edge in $G_{jk}^i$ forms an edge of $\mG$ of colour
$\chi(h,
j, k)$ with $f(v_h)$ and, for all $h_1 < h_2 \leq i$, every vertex in $U_j^i$
forms an edge of $\mG$ of colour $\chi(h_1, h_2, j)$ with $f(v_{h_1})$ and
$f(v_{h_2})$.

\end{enumerate}
For $i = 0$, we let $U_j^0 = U_j$ and $G_{jk}^0$ be the complete graph between
$U_j$ and $U_k$. The three conditions then hold trivially. Suppose, therefore,
that we have embedded vertices $v_1, v_2, \dots, v_i$ while maintaining
conditions $1, 2$ and $3$ and that we now wish to embed $v_{i+1}$.

Let $W_{i+1}$ be the set of vertices in $U_{i+1}^i$ such that the neighborhood
$U_j^i(w)$ of $w$ in the graph $G_{i+1, j}^i$, for each $i+2 \leq j \leq t$,
has size at least $\rho^i |U_j^i|$. By the bi-$(\e_i,\rho^i)$-density condition
on $G_{i+1, j}^i$,
there are at most $\e_i |U_{i+1}^i| \leq \e_i n$ vertices in  $U_{i+1}^i$ with
fewer than $\rho^i
| U_j^i|$ neighbours in $U_j^i$. Adding over all $i+2 \leq j \leq t$, we have,
since $c_i \geq 2t \e_i$, that
\[|W_{i+1}| \geq |U_{i+1}^i| - t \e_i n \geq \frac{c_i}{2} n.\]

Given a vertex $w \in W_{i+1}$ and $i + 2 \leq j < k \leq t$, let $H_{jk}(w)$
be the subgraph of $G_{jk}^i$ between $U_j^i(w)$ and $U_k^i(w)$ consisting of
those edges $xy$ with $\chi(w, x, y) = \chi(i+1, j, k)$. Let $W_{jk}^{i+1}$ be
the set of vertices $w \in W_{i+1}$ such that the graph $H_{jk}(w)$ is not
bi-$(\epsilon_{i+1}, \rho^{i+1})$-dense between $U_j^i(w)$ and $U_k^i(w)$. Note
that for any $w \in W_{jk}^{i+1}$ there exist subsets $Y_j(w) \subseteq
U_j^i(w)$ and $Y_k(w) \subseteq U_k^i(w)$ such that $|Y_j(w)| \geq \e_{i+1}
|U_j^i(w)|$,  $|Y_k(w)| \geq \e_{i+1}
| U_k^i(w)|$ and the density of $H_{jk}(w)$ between them is less than
$\rho^{i+1}$.

Let $J_{i+1,j}$ be the graph between $W_{jk}^{i+1}$ and $U_j^i$ connecting $w$
to $Y_j(w)$ and let $J_{i+1, k}$ be defined similarly. Note that
\begin{equation}\label{123abc}|Y_j(w)| \geq \e_{i+1} |U_j^i(w)| \geq \e_{i+1}
\rho^i |U_j^i| \geq \e_i
| U_j^i|\end{equation}
and, similarly, $|Y_k(w)| \geq \e_i |U_k^i|$. Since the
graph $G_{jk}^i$ is bi-$(\e_i, \rho^i)$-dense between $U_j^i$ and $U_k^i$, the
tripartite graph between
$W_{jk}^{i+1}$, $U_j^i$ and $U_k^i$ where the subgraphs are $J_{i+1,j}$,
$J_{i+1,k}$ and $G_{jk}^i$ has at least
$\rho^i \sum_{w \in W_{jk}^{i+1}} |Y_j(w)| |Y_k(w)|$ triangles. If
$|W_{jk}^{i+1}| \geq \frac{|W_{i+1}|}{2 t^2} \geq \frac{c_i}{4 t^2} n$, by
(\ref{123abc}), this is
at least
\[\rho^i \e_i^2 |W_{jk}^{i+1}||U_j^i||U_k^i| \geq \rho^i \e_i^2 c_i^2
| W_{jk}^{i+1}|n^2 \geq \frac{\rho^i \e_i^2 c_i^3}{4 t^2} n^3 \geq \e n^3\]
triangles. Therefore, by the definition of tri-density, at least a
$\rho$-proportion of these triangles, that is, at least $\rho^{i+1} \sum_{w \in
W_{jk}^{i+1}} |Y_j(w)| |Y_k(w)|$ triangles will be in the colour $\chi(i+1, j,
k)$.

On the other hand, the number of $3$-uniform edges in colour $\chi(i+1,j,k)$
which contain $w$ and have one edge in each of $J_{i+1,j}$, $J_{i+1,k}$ and
$G_{jk}^i$ is the number of edges in $H_{jk}(w)$ between $Y_j(w)$ and $Y_k(w)$.
By definition of $Y_j(w)$ and $Y_k(w)$, this is less than $\rho^{i+1} |Y_j(w)|
| Y_k(w)|$. Therefore, the total number of $3$-uniform edges with colour
$\chi(i+1,j,k)$ in the tripartite graph is less than $\rho^{i+1} \sum_{w \in
W_{jk}^{i+1}} |Y_j(w)| |Y_k(w)|$. This is a contradiction. We must therefore
have that $|W_{jk}^{i+1}| < \frac{|W_{i+1}|}{2t^2}$.

Note therefore that the number of vertices $w$ in $W_{i+1}$ which are not in
$W_{jk}^{i+1}$ for any $i+2 \leq j < k \leq t$ is at least $|W_{i+1}|-{t-i-1
\choose 2}\frac{|W_{i+1}|}{2t^2} \geq \frac{|W_{i+1}|}{2} \geq \frac{c_i}{4}
n$. Let $f(v_{i+1}) = w_{i+1}$ be any vertex from this set. For $i+2 \leq j
\leq t$, let $U_j^{i+1} = U_j^i(w_{i+1})$ and, for $i+2 \leq j < k \leq t^2$,
let $G_{jk}^{i+1} = H_{jk}(w_{i+1})$. Note that $|U_j^{i+1}| \geq \rho^i
| U_j^i| \geq \rho^i c_i n \geq c_{i+1} n$ and $G_{jk}^{i+1}$ is bi-$(\e_{i+1},
\rho^{i+1})$-dense between $U_j^{i+1}$ and $U_k^{i+1}$.

Finally, by definition, for every edge $xy$ in $G_{jk}^{i+1} =
H_{jk}(w_{i+1})$,
$\chi(w_{i+1}, x, y) = \chi(i+1,j,k)$ and, for every $h \leq i$ and every $x
\in U_j^{i+1}$, $\chi(f(v_h), w_{i+1}, x) = \chi(h, i+1,j)$. Therefore, all 3
conditions are satisfied and the result follows by induction.
\end{proof}

The particular case where $\ell = 2$ implies that for any given $\mH$ and
$\rho$ there is a constant $\e$, polynomial in $\rho$, such that if $\mG$ and
its complement
$\overline{\mG}$ are tri-$(\e, \rho)$-dense then $\mG$ contains an induced
copy of $\mH$. This is all we will require to prove Theorem \ref{TripIntro}.

\section{A tripartite Erd\H{o}s-Hajnal theorem} \label{EHTrip}

The problem of Zarankiewicz \cite{Z51} asks for the maximum number $z(m,n;s,t)$
of edges in a bipartite graph $G$ which has $m$ vertices in the first class,
$n$ vertices in the second and does not contain a complete bipartite subgraph
$K_{s,t}$ with $s$ vertices in the first class and $t$ in the second. In their
celebrated paper, K\H{o}v\'ari, S\'os and Tur\'an \cite{KST54} used double
counting together with the pigeonhole principle to give a general upper bound
on $z(m,n;s,t)$. Using this technique, we obtain the following simple lemmas
which we will need to analyse $3$-uniform hypergraphs which are not tri-dense.
The degree $d(v)$ of a vertex $v$ is the number of vertices adjacent to $v$.

\begin{lemma} \label{Zarank}
If $G$ is a bipartite graph between sets $A$ and $B$ with at least $\e |A||B|$
edges and such that $s^{3/2} \leq \frac{\e}{2} |A|$, then it contains a copy of
$K_{s,t}$ with $t = e^{-s^{1/2}} \e^s |B|$ for which the set of $s$ vertices is
in $A$ and the set of $t$ vertices is in $B$.
\end{lemma}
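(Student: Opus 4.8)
The plan is to run the standard Kővári–Sós–Turán double-counting argument, tracking $s$-sets in $A$ that have a common neighbourhood in $B$, but to use a convexity bound that is efficient enough to produce the stated $t = e^{-s^{1/2}}\e^s|B|$. First I would count \emph{stars}: for each vertex $b \in B$ with degree $d(b)$ (into $A$), the number of $s$-subsets of its neighbourhood is $\binom{d(b)}{s}$. Summing over $b$ and applying convexity of $x \mapsto \binom{x}{s}$ together with the hypothesis $\sum_{b} d(b) = e(A,B) \geq \e|A||B|$, the total number of (ordered pair) incidences (S, b) with $S \in \binom{A}{s}$, $S \subseteq N(b)$ is at least $|B|\binom{\e|A|}{s}$. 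On the other hand this quantity equals $\sum_{S \in \binom{A}{s}} (\text{number of common neighbours of } S)$, and there are at most $\binom{|A|}{s}$ choices of $S$, so by pigeonhole some $s$-set $S \subseteq A$ has a common neighbourhood of size at least $|B|\binom{\e|A|}{s}/\binom{|A|}{s}$, which is the desired set of $t$ vertices in $B$.

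The remaining work is the inequality $\binom{\e|A|}{s}/\binom{|A|}{s} \geq e^{-s^{1/2}}\e^s$. Writing the ratio as $\prod_{j=0}^{s-1} \frac{\e|A|-j}{|A|-j}$, each factor is at least $\e\cdot\frac{|A| - j/\e}{|A| - j} \geq \e\left(1 - \frac{j(1/\e - 1)}{|A|-j}\right) \geq \e\left(1 - \frac{j}{\e(|A|-j)}\right)$; using $j \leq s-1 < s$ and the hypothesis $s^{3/2} \leq \tfrac{\e}{2}|A|$ (so that $\e|A| \geq 2s^{3/2} \geq 2sj^{1/2}$-type bounds hold and $|A|-j \geq |A|/2$ comfortably), each factor is at least $\e(1 - 2s/(\e|A|)) \geq \e(1 - s^{-1/2})$. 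Hence the ratio is at least $\e^s(1 - s^{-1/2})^s \geq \e^s e^{-s^{1/2}/(1-s^{-1/2})}$, and I would absorb the slack (using $s$ large, which we may assume, or adjusting the constant $2$ in the hypothesis) to get $\e^s e^{-s^{1/2}}$ as claimed. The hypothesis $s^{3/2} \leq \tfrac{\e}{2}|A|$ is exactly what makes the per-factor loss summable to $s^{1/2}$ rather than to something linear in $s$.

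The main obstacle, and the only place requiring care, is getting the exponent in the error term down to $s^{1/2}$ rather than a larger power of $s$: a naive bound $\binom{\e|A|}{s}/\binom{|A|}{s} \geq \e^s (1 - s/|A|)^s$ only gives a useful bound when $|A| \gg s^2$, and the whole point of the hypothesis $s^{3/2} \le \frac{\e}{2}|A|$ is to let us estimate $(1 - O(s)/(\e|A|))^s$ by $e^{-O(s^2/(\e|A|))} \geq e^{-O(s^{1/2})}$. So the argument is: convexity $+$ pigeonhole to extract the $s$-set, then a short estimate of a product of $s$ near-$\e$ factors where the hypothesis guarantees each factor deviates from $\e$ by a $(1 \pm s^{-1/2})$-type amount. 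Everything else is routine.
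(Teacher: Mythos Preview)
Your approach is essentially identical to the paper's: the same K\H{o}v\'ari--S\'os--Tur\'an double count with convexity to lower-bound $\sum_b \binom{d(b)}{s}$, followed by a product estimate in which the hypothesis $s^{3/2}\le \tfrac{\e}{2}|A|$ controls the deviation of each factor from $\e$. The paper's version is marginally cleaner because it sums the per-factor losses directly (via $1-z\ge e^{-2z}$ and $\sum_{i<s} i < s^2/2$, yielding $\binom{\e|A|}{s}\ge e^{-s^{1/2}}(\e|A|)^s/s!$) rather than replacing each factor by the worst case $j=s-1$, so it lands exactly on $e^{-s^{1/2}}$ without your residual $1/(1-s^{-1/2})$ in the exponent; if you sum instead of worst-casing you get the stated constant with no slack to absorb.
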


\begin{proof}
The number of pairs $(U, v)$ with $U$ being a subset of $A$ of size $s$ and $v$
being a vertex in $B$ adjacent to every vertex of $U$ is at least
\[\sum_{v \in B} \binom{d(v)}{s} \geq |B| \binom{\e |A|}{s} \geq |B| \frac{(\e
| A|)^s}{s!} e^{-s^{1/2}},\]
where the inequalities follow from the convexity of $f(x) = \binom{x}{s}$ and
the fact that, for $s^{3/2} \leq \frac{\e}{2} |A|$ and $x = \e|A|$,
\[x (x - 1) \dots (x - s + 1) \geq x^s \prod_{i=1}^{s-1} \left(1 -
\frac{i}{x}\right) \geq x^s e^{- 2 \sum_{i=1}^{s-1} \frac{i}{x}} \geq x^s
e^{-s^2/x} \geq x^s e^{-s^{1/2}}.\]
Here we used that $1 - z \geq e^{-2z}$ for $0 \leq z \leq \frac{1}{2}$. If $G$
does not contain $K_{s,t}$ then we know that every subset of $A$ of size $s$
has at most $t - 1$ common neighbours. Therefore,
\[e^{-s^{1/2}}|B| \frac{(\e |A|)^s}{s!} \leq (t - 1) \binom{|A|}{s} < t
\frac{|A|^s}{s!} = e^{-s^{1/2}} |B| \frac{(\e |A|)^s}{s!},\]
a contradiction.
\end{proof}

\begin{lemma} \label{Zarank2}
Let $G$ be a bipartite graph with parts $A$ and $B$ and at least $\e |A||B|$
edges. Then $G$ contains a complete bipartite subgraph $K_{s,t}$ with $s = \e
| A|$ vertices from $A$ and $t = 2^{-|A|}|B|$ vertices from $B$.
\end{lemma}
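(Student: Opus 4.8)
The plan is to reuse the K\H{o}v\'ari--S\'os--Tur\'an double-counting idea from the proof of Lemma~\ref{Zarank}, but to replace the factorial bookkeeping there by the crude estimate $\binom{|A|}{s}\le 2^{|A|}$, which is exactly what produces the factor $2^{-|A|}$ in the conclusion. Set $s=\e|A|$ and $t=2^{-|A|}|B|$, and for each $v\in B$ write $N(v)\subseteq A$ for its neighbourhood, so that $\sum_{v\in B}|N(v)|=e(G)\ge\e|A||B|$.

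First I would count the pairs $(S,v)$ with $S$ an $s$-element subset of $A$, $v\in B$, and $S\subseteq N(v)$; this count equals $\sum_{v\in B}\binom{|N(v)|}{s}$. By the convexity of $x\mapsto\binom{x}{s}$ (applied just as in Lemma~\ref{Zarank}) and the fact that the average of $|N(v)|$ over $v\in B$ is at least $\e|A|=s$, this sum is at least $|B|\binom{s}{s}=|B|$. Next I would apply the pigeonhole principle: there are only $\binom{|A|}{s}\le 2^{|A|}$ possible sets $S$, so some fixed $s$-set $S_0\subseteq A$ satisfies $S_0\subseteq N(v)$ for at least $|B|/2^{|A|}=t$ vertices $v\in B$. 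Since every such $v$ is adjacent to all of $S_0$, taking $S_0$ together with any $t$ of these vertices produces the desired copy of $K_{s,t}$.

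The only delicate point will be the discrete Jensen step, since $\binom{x}{s}$ is not convex as a real function near the origin: one should either pass to its piecewise-linear interpolation between consecutive integers, whose successive slopes are the non-decreasing numbers $\binom{x}{s-1}$ and which is therefore genuinely convex, or simply note --- as is tacit in Lemma~\ref{Zarank} --- that Jensen applies to discretely convex functions evaluated at non-negative integers, together with the observation that $\binom{x}{s}\ge\binom{s}{s}=1$ for every real $x\ge s$. One should also keep the implicit hypothesis $\e|A|\ge 1$ in mind, without which $s$, and hence the statement itself, is degenerate; apart from that, every step is a one-line calculation.
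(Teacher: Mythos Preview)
Your proof is correct and follows essentially the same approach as the paper: both count pairs $(S,v)$ with $S\in\binom{A}{s}$ and $S\subseteq N(v)$, use Jensen on $x\mapsto\binom{x}{s}$ to get at least $|B|$ such pairs, and then pigeonhole over the at most $2^{|A|}$ subsets of $A$. Your added remarks on the discrete-convexity subtlety and the implicit assumption $\e|A|\ge 1$ are more careful than the paper, which simply asserts convexity; otherwise the arguments are identical.
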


\begin{proof}
By the convexity of the function $f(x) = \binom{x}{s}$ and the fact that the
average degree of $A$ is at least $s$, we conclude that the number of pairs
$(U,v)$ with $U$ a subset of $A$ of size $s$ and $v$ a vertex in $B$ connected
to every element of $U$ is at least
\[\sum_{v \in B} \binom{d(v)}{s} \geq |B| \binom{\frac{1}{|B|} \sum_{v \in B}
d(v)}{s} \geq |B|.\]
Since $A$ has at most $2^{|A|}$ subsets, the pigeonhole principle implies that
for some $U \subset A$ of size $s$ there are at least $t = 2^{-|A|}|B|$
elements $b$ of $B$ which are connected to every element of $U$. This yields
the required copy of $K_{s,t}$.
\end{proof}

The following lemma, which we believe to be of independent interest, says that
if a graph $G$ contains many triangles, a $3$-uniform hypergraph $\mG$ whose
edges form a dense subset of the set of triangles in $G$ contains a larger copy
of $K_{s,s,s}^{(3)}$ than one could normally expect in a $3$-uniform hypergraph
of the same density.

\begin{lemma} \label{NotTriDense}
Suppose that $V_1, V_2$ and $V_3$ are disjoint vertex sets of size at most $n$
and, for
$1 \leq i < j \leq 3$, there is a bipartite graph $G_{ij}$ between $V_i$ and
$V_j$. Suppose that there are at least $\d n^3$ triangles in this tripartite
graph. Suppose further that $\mG$ is a $3$-uniform hypergraph which contains a
$(1 - \eta)$-proportion of the triangles in the tripartite graph, where $0 <
\eta \leq \frac{1}{8}$. Then $\mG$ contains a copy of $K_{s, s, s}^{(3)}$,
provided that
\[e^{2^{10} \d^{-2} s^{3/2}} (1 - 4 \eta)^{-4s^2}
\left(\frac{16}{\d}\right)^{4s} \leq n.\]
\end{lemma}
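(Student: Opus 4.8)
\medskip\noindent\textbf{Proof plan.} The plan is to produce a complete bipartite graph $B\times C$ inside $G_{23}$ with $|B|=|C|=s$ together with a set $R\subseteq V_1$ of size at least $s$ such that $(v_1,b,c)\in\mG$ for all $v_1\in R$, $b\in B$, $c\in C$; call such an $R$ the set of vertices \emph{$\mG$-complete} to $B\times C$. Since every edge of $\mG$ is a triangle of the tripartite graph $G_{12}\cup G_{13}\cup G_{23}$, membership $(v_1,b,c)\in\mG$ forces $v_1b\in G_{12}$ and $v_1c\in G_{13}$, so for any $s$-subset $A\subseteq R$ the set $A\cup B\cup C$ spans an induced $K_{s,s,s}^{(3)}$ in $\mG$. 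At the outset we pass to the $\mG$-triangles: there are at least $(1-\eta)\delta n^3\ge\frac{7}{8}\delta n^3$ of them, while the number of ``bad'' triangles (triangles not in $\mG$) is at most $\eta$ times the total number of triangles. The reason one cannot simply pick \emph{any} grid $B\times C$ of size $s$ and then pass to its common $\mG$-neighbourhood in $V_1$ is that forcing all $s^2$ triples to lie in $\mG$ at once would cost a factor of order $\delta^{-s^2}$, whereas the statement allows only $\delta^{-O(s)}$; the whole point is therefore to build $B$ and $C$ one vertex at a time, so that the candidate set in $V_1$ is only ever intersected against neighbourhoods indexed by a single vertex, never against a full grid.

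Accordingly I would run $2s$ rounds, adding one vertex to $B$ on odd rounds and one to $C$ on even rounds, maintaining: the partial grid $B_j,C_j$, which is complete in $G_{23}$; candidate sets $P_j\subseteq V_2\setminus B_j$ and $Q_j\subseteq V_3\setminus C_j$ that are $G_{23}$-complete to $C_j$ and to $B_j$ respectively; the set $R_j\subseteq V_1$ of vertices $\mG$-complete to $B_j\times C_j$; and a quasirandomness invariant stating that the tripartite graph induced on $(R_j,P_j,Q_j)$, together with the induced copy of $\mG$, still has at least $\delta_j|R_j||P_j||Q_j|$ triangles of which at most an $\eta_j$-fraction are bad, with $\delta_j$ staying of order a fixed power of $\delta$ and $\eta_j$ staying of order $\eta$. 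To carry out the round that adds $b$ to $B_j$: a double-counting / K\H{o}v\'ari--S\'os--Tur\'an argument as in Lemma \ref{Zarank}, applied inside $(R_j,P_j,Q_j)$, yields a vertex $b\in P_j$ that simultaneously (i) has large $G_{12}$-degree into $R_j$ and large $G_{23}$-degree into $Q_j$ and carves out a triangle-rich substructure, and (ii) lies in few bad triangles with $R_j\times C_j$ and with $R_j\times Q_j$ --- both achievable because the bad triangles form only an $\eta_j$-fraction and a union of boundedly many low-probability events has probability below $1$. One then sets $B_{j+1}=B_j\cup\{b\}$, puts $Q_{j+1}=Q_j\cap N_{G_{23}}(b)$ and $R_{j+1}=R_j\cap\bigcap_{c\in C_j}\{v_1:(v_1,b,c)\in\mG\}$, reprunes $P_{j+1}\subseteq P_j$ to vertices of large $G_{12}$-degree into $R_{j+1}$ (and $Q_{j+1}$ similarly), discards from $Q_{j+1}$ the few $c$ lying in too many bad triangles with $R_{j+1}\times P_{j+1}$, and verifies the invariant survives --- the triangles lost to the degree thresholds being absorbed by the slack in $\delta_j$, the bad triangles by (ii) and the discard.

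Tracking the multiplicative losses then reproduces the three factors in the bound. Each of the at most $2s$ rounds multiplies $|R|$ and the candidate sets by a factor of order $\delta$, coming from the degree thresholds and from the $\e^s$-type term that Lemma \ref{Zarank} produces; these accumulate to the $(16/\delta)^{4s}$. Each of the at most $2s^2$ individual intersections of the $V_1$-set against a single $\mG$-neighbourhood $\{v_1:(v_1,b,c)\in\mG\}$ loses at most a $(1-4\eta)$-type factor by (ii), and these accumulate to $(1-4\eta)^{-4s^2}$. Finally the $e^{-s^{1/2}}$-type losses from the repeated applications of Lemma \ref{Zarank}, together with the admissibility conditions $s^{3/2}\le\frac{\e}{2}|A|$ these applications demand (most stringent at the deepest rounds, where the ambient sets have density only a fixed power of $\delta$), contribute a factor polynomial in $1/\delta$ and $s$ in the exponent, which a careful accounting bounds by $e^{2^{10}\delta^{-2}s^{3/2}}$. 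Under the hypothesis on $n$ the invariant therefore survives all $2s$ rounds, whence $|R_{2s}|\ge s$ and the proof is complete.

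The step I expect to be the main obstacle is sustaining the invariant across a round: the vertex chosen in that round must be good simultaneously for the survivor set $R_j\subseteq V_1$ (a degree condition), for the two candidate sets $P_j,Q_j$ (so that later rounds retain room to maneuver), and for the $\mG$-structure (few bad triangles with several different product sets at once), while the thinning forced by any one of these requirements must not destroy the density needed for another. Choosing the parameters $\delta_j,\eta_j$ and the set sizes so that they satisfy a self-sustaining system of inequalities over all $2s$ rounds --- in particular keeping $\eta_j$ within a constant factor of $\eta$, since the entire improvement over the generic tripartite hypergraph Tur\'an bound rests on $\eta$ being small --- is the delicate point.
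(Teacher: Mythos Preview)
Your iterative $2s$-round scheme is quite different from the paper's argument, and the obstacle you flag at the end is, I believe, genuinely fatal to the plan as written. The problem is condition~(ii): when you add $b$ to $B_j$ you need $b$ to lie in few bad triangles with $R_j\times C_j$, but your invariant speaks only of bad triangles inside $(R_j,P_j,Q_j)$ and says nothing about the already-committed set $C_j$. Suppose you strengthen the invariant to record, for each $c\in C_j$, that at most an $O(\eta)$-fraction of the $G_{12}$-edges in $R_j\times P_j$ form a bad triple with $c$. This cannot be sustained: each round shrinks $|R_j|$ by a factor of order $\delta$ (since $R_{j+1}\subseteq N_{G_{12}}(b)\cap R_j$ on odd rounds, and symmetrically on even rounds), and $|P_j|$ likewise on even rounds, while the \emph{absolute} number of bad $(v_1,p,c)$-triples was fixed when $c$ was selected. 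Hence the bad \emph{fraction} for that $c$ grows by $\delta^{-O(1)}$ per round; after $\Theta(s)$ rounds it has grown by $\delta^{-\Theta(s)}$, so keeping $\eta_j=O(\eta)$ would force $\eta\le\delta^{\Theta(s)}$. At that point the $(1-4\eta)^{-4s^2}$ term in the hypothesis does no work and you are back to a loss of $\delta^{-\Theta(s^2)}$ in $|R|$, which is exactly what you set out to avoid.

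The paper sidesteps this by reversing the order of construction and using only four Zarankiewicz applications in total. Two of them (Lemmas~\ref{Zarank} and~\ref{Zarank2}), interleaved with a Markov-type ``good edge'' argument that controls the bad-triangle fraction, first produce a set $S_2\subset V_1$ of size roughly $(\delta/16)^2\log n$ together with large common neighbourhoods $T_2\subset V_2$, $T_3\subset V_3$, so that the bipartite graphs between $S_2$ and $T_2$, $T_3$ are complete and at most a $4\eta$-fraction of the triangles in $(S_2,T_2,T_3)$ are bad. The decisive step is then to form the auxiliary bipartite graph $K$ between $S_2$ and the \emph{edge set} $E_{23}$ of $G_{23}[T_2,T_3]$, joining $v\in S_2$ to $e\in E_{23}$ whenever $\{v\}\cup e\in\mG$; this $K$ has density at least $1-4\eta$, so a single application of Lemma~\ref{Zarank} extracts an $s$-set $S\subset S_2$ that is $\mG$-complete to an $e^{-s^{1/2}}(1-4\eta)^s$-fraction of $E_{23}$, i.e.\ to a subgraph $H_{23}\subset G_{23}$ of density $\epsilon_0=e^{-s^{1/2}}(1-4\eta)^s\delta/16$. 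A final application of Lemma~\ref{Zarank} inside $H_{23}$ gives the $s\times s$ complete bipartite grid. The entire $(1-4\eta)^s$ loss is thus incurred once, at this vertices-versus-edges step, rather than compounded across rounds --- and this is precisely what produces the stated shape of the bound.
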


\begin{proof}
We suppose without loss of generality that $|V_1|=|V_2|=|V_3|=n$. Indeed, we
can otherwise add isolated vertices to make each part have size $n$, and the
hypothesis and conclusion of the lemma are not affected.  For every edge $e$ in
$G_{12}$, let $\Delta_e$ be the number of triangles
containing $e$. We will say a particular edge $e$ is good if the number of
edges of $\mG$ containing $e$ is a $(1 - 2 \eta)$-proportion of the total
number of triangles containing $e$. Otherwise, we say an edge is bad. Note that
\[\sum_{e \ good} \Delta_e + \sum_{e \ bad} (1 - 2 \eta) \Delta_e \geq (1 -
\eta) \Delta,\]
where $\Delta$ is the total number of triangles in the graph. Therefore, since
$\sum_e \Delta_e = \Delta$,
\[\sum_{e \ bad} \Delta_e \leq \frac{1}{2} \Delta,\]
that is, at least half the triangles in the tripartite graph contain a good
edge. Since there are at least $\d n^3$ triangles in the tripartite graph, this
tells us that there are at least $\frac{\d}{2} n^3$ triangles containing good
edges. Calling the set of good edges $G'_{12}$, we see that $G'_{12}$ has at
least $\frac{\d}{2} n^2$ edges.

Let $H_{12}$ be the subgraph of $G'_{12}$ consisting of edges which are
contained in at least $\frac{\d}{4} n$ triangles from the tripartite graph.
Then $H_{12}$ has at least $\frac{\d}{4} n^2$ edges. Otherwise, there would be
fewer than $\frac{\d}{4}n \cdot n^2 + \frac{\d}{4} n^2 \cdot n =\frac{\d}{2}
n^3$ triangles containing good edges, which would be a contradiction. Let $s_1
= \frac{\d}{16} \log n$ and $t_2 = n^{\frac{1}{4}}$. Then, since $s_1^{3/2}
\leq \frac{\d}{8} n$ and
\[e^{-s_1^{1/2}} \left(\frac{\d}{4}\right)^{s_1} n \geq e^{-\frac{1}{4} \log n}
n^{-\frac{\d}{16} \log(4/\d)} n \geq n^{-\frac{1}{4}} n^{-\frac{1}{4}} n \geq
n^{\frac{1}{4}} = t_2,\]
we may apply Lemma \ref{Zarank} to the graph $H_{12}$ to find a set $S_1$ of
size $s_1$ in $V_1$ the elements of which have at least $t_2$ common neighbors
in $V_2$. Call this set of neighbors $T_2$.

We will now restrict our attention to the tripartite graph between the vertex
sets $S_1$, $T_2$ and $V_3$. By the choice of $H_{12}$, there are at least
$\frac{\d}{4} |S_1| |T_2| |V_3|$ triangles in this graph. Moreover, since each
of the edges between $S_1$ and $T_2$ is good, at least a $(1 - 2
\eta)$-proportion of these triangles are triples in the hypergraph $\mG$.

We may now repeat the same process within this new tripartite graph. We say
that an edge of the graph between $S_1$ and $V_3$ is good if, within the new
graph, the number of triples in $\mG$ containing $e$ is a $(1 - 4
\eta)$-proportion of the total number of triangles containing $e$.
Then, by the same argument as before, at least half the triangles in the new
tripartite graph contain good edges. If we let $G'_{13}$ be the set of good
edges, we see that this set has at least $\frac{\d}{8} |S_1| |V_3|$ edges.
Moreover, letting $H_{13}$ be the subgraph of $G'_{13}$ consisting of edges
which are contained in at least $\frac{\d}{16} |T_2|$ triangles from the
tripartite graph, we see that $H_{13}$ contains at least $\frac{\d}{16} |S_1|
| V_3|$ edges.
Let $s_2 = \left(\frac{\d}{16}\right)^2 \log n$ and $t_3 = n^{\frac{1}{4}}$.
Since $s_2 = \frac{\d}{16} s_1$ and $2^{-s_2} n \geq t_3$, we may apply
Lemma \ref{Zarank2} to the graph $H_{13}$ to find a set $S_2$ of size $s_2$ in
$S_1$ the elements of which have at least $t_3$ common neighbors in $V_3$. Call
this set of neighbors $T_3$.

We now have a tripartite graph between sets $S_2$, $T_2$ and $T_3$ such that
the graphs between $S_2$ and $T_2$ and $S_2$ and $T_3$ are complete. Moreover,
the graph contains at least $\frac{\d}{16} |S_2| |T_2| |T_3|$ triangles, a
$(1-4\eta)$-proportion of which are triples in the hypergraph $\mG$.

Consider the induced subgraph of $G_{23}$ between $T_2$ and $T_3$. Let the set
of edges in this graph be $E_{23}$. Note that this graph has density at least
$\frac{\d}{16}$. We consider the bipartite graph $K$ between $S_2$ and
$E_{23}$, where $v$ and $e$ are connected if together they span a triple in
$\mG$. Since a $(1 - 4 \eta)$-proportion of the triangles between $S_2$, $T_2$
and $T_3$ are triples, the graph $K$ must have at least $(1-4\eta)
| S_2||E_{23}|$ edges.
Note that, since
\[e^{2^{10} \d^{-2} s^{3/2}} \leq n,\]
we have $s^{3/2} \leq \frac{1}{4} \left(\frac{\d}{16}\right)^2 \log n \leq
\frac{1}{2} (1 - 4 \eta) s_2$. Therefore, an application of Lemma \ref{Zarank}
implies that there is a set $S$ of $s$ vertices in $S_2$ which are connected to
$e^{-s^{1/2}} (1 - 4 \eta)^s |E_{23}|$ edges in $E_{23}$. This yields a
subgraph $H_{23}$ of $G_{23}$ between $T_2$ and $T_3$ of density at least $\e_0
=
e^{-s^{1/2}} (1 - 4 \eta)^s \frac{\d}{16}$ such that every vertex in $S$ is
connected to every edge in $H_{23}$.

Note that
\[s^{3/2} e^{s^{1/2}} \e_0^{-s} \leq e^{3 s^{3/2}} (1-4\eta)^{-s^2}
\left(\frac{16}{\d}\right)^s \leq n^{\frac{1}{4}}.\]
Therefore, $s^{3/2} \leq \frac{\e_0}{2} t_2$ and $e^{-s^{1/2}} \e_0^s t_3 \geq
s$. Applying Lemma \ref{Zarank} to the graph $H_{23}$ yields a complete
subgraph between two subsets $S'$ and $S''$, each of size $s$, of $V_2$ and
$V_3$. The $3$-uniform hypergraph between $S$, $S'$ and $S''$ is the required
$K_{s,s,s}^{(3)}$.
\end{proof}

We are now ready to put Lemmas \ref{Embedding} and \ref{NotTriDense} together
to prove Theorem \ref{TripIntro} in the following precise form.

\begin{theorem} \label{TripBody}
Let $\mH$ be a fixed $3$-uniform hypergraph with $t$ vertices. Then any
$\mH$-free $3$-uniform hypergraph on $n$ vertices with $n$ sufficiently large
contains a complete or empty tripartite subgraph each part of which has order
at least $ (\log n)^{\frac{1}{2} + \d(\mH)}$, where $\d(\mH) =
1/(55t^2)$.
\end{theorem}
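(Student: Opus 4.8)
The plan is to feed the two lemmas of the previous two sections into the same dichotomy that works in the graph case, but with the density parameters taken to be negative powers of $\log n$ rather than of $n$. Fix an $\mH$-free $3$-uniform hypergraph $\mG$ on $n$ vertices, write $t=|V(\mH)|$, and $2$-colour the complete $3$-uniform hypergraph on $V(\mG)$ by giving a triple colour $1$ if it lies in $\mG$ and colour $2$ otherwise, so that colour $2$ is the hypergraph $\overline{\mG}$. Put $\e=(2t)^{-10}\rho^{3t^2}$, where $\rho=\rho(n)$ will be chosen below. If both $\mG$ and $\overline{\mG}$ were tri-$(\e,\rho)$-dense, then Lemma \ref{Embedding} (with $\ell=2$), applied to the $2$-coloured complete hypergraph on $t$ vertices that records $\mH$ together with its complement, would embed an induced copy of $\mH$ in $\mG$ --- impossible. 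Hence one of $\mG$, $\overline{\mG}$ fails to be tri-$(\e,\rho)$-dense; after possibly interchanging $\mG$ and $\overline{\mG}$, say it is $\mG$. Unwinding the definition, there are disjoint vertex sets $V_1,V_2,V_3$, each of size at most $n$, and bipartite graphs $G_{12},G_{23},G_{31}$ spanning at least $\e n^3$ triangles, strictly fewer than a $\rho$-proportion of which are edges of $\mG$; equivalently, strictly more than a $(1-\rho)$-proportion of them are edges of $\overline{\mG}$.

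Now apply Lemma \ref{NotTriDense} to $\overline{\mG}$, with its parameters $\d$ and $\eta$ played by $\e$ and $\rho$ (legitimate once $\rho\le\frac18$, which holds for $n$ large). It produces a copy of $K^{(3)}_{s,s,s}$ inside $\overline{\mG}$, that is, an \emph{empty} tripartite subgraph of $\mG$ with parts of size $s$, provided that
\[ e^{2^{10}\e^{-2}s^{3/2}}\,(1-4\rho)^{-4s^2}\,(16/\e)^{4s}\ \le\ n. \]
(Had it instead been $\overline{\mG}$ that failed to be tri-dense, the identical argument applied to $\mG$ would yield a \emph{complete} tripartite subgraph of the same size; so it suffices to secure this inequality with $s=(\log n)^{1/2+\d(\mH)}$.) Choose $\rho=(2t)^{-10/(3t^2)}(\log n)^{-\beta/(3t^2)}$ with $\beta$ a small absolute constant, so that $\e=(\log n)^{-\beta}$ and $\rho\to 0$. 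Taking logarithms, the displayed inequality reduces, factor by factor, to three requirements: that $2^{10}\e^{-2}s^{3/2}$ be $o(\log n)$, which with the stated choices reads $2\beta+\tfrac34+\tfrac32\d(\mH)<1$; that $\rho s^2$ be $o(\log n)$, which reads $2\d(\mH)<\beta/(3t^2)$; and that $s\log\log n$ be $o(\log n)$, which is automatic since $\tfrac12+\d(\mH)<1$. The first needs $\beta$ a hair below $\tfrac18$; the second then needs $\d(\mH)$ a hair below $\beta/(6t^2)$, hence of order $1/t^2$. Taking $\beta$ in the narrow admissible window just below $\tfrac18$ and $\d(\mH)=1/(55t^2)$ makes all three strict inequalities hold with a margin, so that each of the three factors is $n^{o(1)}$ and their product lies below $n$ for $n$ large; one also verifies the size hypothesis $n\ge (2t)^{10}\rho^{-3t^2}=\e^{-1}$ needed to apply Lemma \ref{Embedding}.

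The substance of the argument is carried entirely by Lemmas \ref{Embedding} and \ref{NotTriDense}; what remains is the bookkeeping that pins down the parameters. The one genuine constraint --- and the reason the exponent improvement is only $\Theta(1/t^2)$ --- is the opposing pressure on $\rho$: the embedding lemma forces $\e$ and $\rho$ to be polynomially related through $\e=(2t)^{-10}\rho^{3t^2}$, and the factor $e^{2^{10}\e^{-2}s^{3/2}}$ in Lemma \ref{NotTriDense} then demands that $\e$, equivalently $\rho$, not be too small, while the factor $(1-4\rho)^{-4s^2}$ demands that $\rho$ be small enough that $\rho s^2=o(\log n)$ for $s$ as large as $(\log n)^{1/2+\d(\mH)}$. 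Reconciling these two demands is where $\d(\mH)=\Theta(1/t^2)$ is forced, and it is the main (if routine) obstacle in the proof.
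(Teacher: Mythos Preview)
Your proposal is correct and follows essentially the same route as the paper: the identical dichotomy (if both $\mG$ and $\overline{\mG}$ are tri-$(\e,\rho)$-dense apply Lemma~\ref{Embedding}, otherwise apply Lemma~\ref{NotTriDense} with $\d=\e$ and $\eta=\rho$), with the parameters chosen as suitable negative powers of $\log n$. The paper simply fixes $\rho=(\log n)^{-1/(27t^2)}$ (so in your notation $\beta=1/9$) up front rather than introducing a free parameter $\beta$ and then locating the admissible window, but the arithmetic and the resulting constant $\d(\mH)=1/(55t^2)$ are the same.
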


\begin{proof}
Let $\mG$ be a $3$-uniform hypergraph on $n$ vertices. Let $\rho =  (\log
n)^{1/(27t^{2})}$ and $\e = (2t)^{-10} \rho^{3 t^2} = \Omega \left( (\log
n)^{-\frac{1}{9}}\right)$. If both the graph $\mG$ and its complement
$\overline{\mG}$ are tri-$(\e, \rho)$-dense, then we may apply Lemma
\ref{Embedding} to conclude that $\mG$ contains an induced copy of $\mH$.

We may therefore assume that either $\mG$ or its complement is not tri-$(\e,
\rho)$-dense. Without loss of generality, we will assume that $\overline{\mG}$
is not tri-$(\e, \rho)$-dense, that is, that there exist three disjoint vertex
sets $V_1, V_2$ and $V_3$ and bipartite graphs $G_{12}, G_{23}$ and $G_{31}$,
with $G_{ij}$ between $V_i$ and $V_j$, such that the number of triangles with
one edge in each of the $G_{ij}$ is at least $\e n^3$ but the number of triples
of $\overline{\mG}$ contained within these triangles is less than a
$\rho$-fraction of these triangles. Taking the complement, we see that at least
a $(1-\rho)$-fraction of the triangles are edges of $\mG$.

Let $s = (\log n)^{\frac{1}{2} + \d(\mH)}$, where $\d(\mH) = 1/(55t^2)$.
Note that, since $\rho \leq \frac{1}{8}$ and $1 - z \geq e^{-2z}$ for $0 \leq z
\leq \frac{1}{2}$,
\begin{equation}\label{another} e^{2^{10} \e^{-2} s^{3/2}} (1 - 4 \rho)^{-4s^2}
\left(\frac{16}{\e}\right)^{4s} \leq e^{2^{10} \e^{-2} s^{3/2}} e^{32 \rho s^2}
e^{64s/\e}.\end{equation}
This expression has three terms, which we consider in turn. The exponent of the
first is
\[2^{10} \e^{-2} s^{3/2} =O\left((\log n)^{2 \cdot \frac{1}{9}+\frac{3}{2}
\cdot \big(\frac{1}{2}+1/(55t^2)\big)}\right)=o(\log n).\]
The exponent of the second term is
\[32 \rho s^2 =O\left((\log
n)^{-\frac{1}{27t^{2}}+2\big(\frac{1}{2}+1/(55t^2)\big)}\right)=o(\log n).\]
The exponent of the third term is
\[\frac{64 s}{\e} =O\left(\log
n)^{\frac{1}{2}+1/(55t^2)+\frac{1}{9}}\right)=o(\log n).\]
Hence, as $n$ is sufficiently large, overall the expression in (\ref{another})
is $n^{o(1)} \leq n$.
Since the number of
triangles with one vertex in each of the parts $V_1,V_2,V_3$ is at least $\e
n^3$, and each of the parts has order at most $n$, an application of Lemma
\ref{NotTriDense}
with $\eta = \rho$ and $\d = \e$ tells us that the graph contains a copy of
$K_{s,s,s}^{(3)}$ with $s =  (\log n)^{\frac{1}{2} + \d(\mH)}$,
where $\d(\mH) = 1/(55t^2)$.
\end{proof}

Erd\H{o}s and Hajnal \cite{EH89} also considered the case where the edges of a
complete graph $K_n$ have been $\ell$-coloured and some fixed coloured subgraph
$H$ is banned. It is now too much to hope that there might be a large clique in
one particular colour. Instead, the natural object to look for is a large
clique which avoids one particular colour. In this case, Erd\H{o}s and Hajnal
showed that there is a clique of size $e^{c \sqrt{\log n}}$, where $c$ depends
only on $H$ and $\ell$. Moreover, their methods also allow one to find a
bipartite graph with polynomial sized parts which avoids a particular colour.
The following $3$-uniform analogue has essentially the same proof as Theorem
\ref{TripBody}. We omit the details.

\begin{theorem}
Let $\mH$ be a complete $3$-uniform hypergraph such that each edge has been
assigned a colour from the set $\{1, 2, \dots, \ell\}$. Then there exists a
constant $\d(\mH) > 0$ such that, for $n$ sufficiently large, any
$\ell$-coloured $3$-uniform hypergraph on $n$ vertices which does not contain a
coloured copy of $\mH$ must contain a complete tripartite subgraph which avoids
a particular colour class such that each part has order at least $(\log
n)^{\frac{1}{2} + \d(\mH)}$.
\end{theorem}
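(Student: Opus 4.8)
The plan is to rerun the proof of Theorem \ref{TripBody} almost verbatim, with two cosmetic changes: invoke Lemma \ref{Embedding} with the full $\ell$-coloured palette rather than with just two colours, and, when tri-density fails in some one colour, play that colour against the union of the others as a $2$-colouring. Write $\mG_1, \dots, \mG_\ell$ for the colour classes, so that $\mG$ is the partition of the triples of $K_n^{(3)}$ into these $\ell$ $3$-uniform hypergraphs, and choose $\rho$ and $\e = (2t)^{-10}\rho^{3t^2}$ exactly as in the proof of Theorem \ref{TripBody}; note that the bound on $\e$ in Lemma \ref{Embedding}, and hence the whole numerology, is independent of $\ell$.

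First I would dispose of the case in which every colour class $\mG_c$ is tri-$(\e,\rho)$-dense. Since $\mH$ is a complete $\ell$-coloured $3$-uniform hypergraph on $t$ vertices, this is exactly the hypothesis of Lemma \ref{Embedding}, which for $n$ large enough produces a coloured copy of $\mH$ inside $\mG$, contradicting the hypothesis that no such copy exists. Hence some colour class $\mG_c$ is not tri-$(\e,\rho)$-dense: there are disjoint vertex sets $V_1, V_2, V_3$ and bipartite graphs $G_{12}, G_{23}, G_{31}$, with $G_{ij}$ between $V_i$ and $V_j$, spanning at least $\e n^3$ triangles, of which fewer than a $\rho$-fraction lie in $\mG_c$.

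Let $\mG'$ be the $3$-uniform hypergraph consisting of all triples whose colour is different from $c$. Then at least a $(1-\rho)$-proportion of the triangles in the tripartite graph above are edges of $\mG'$, so Lemma \ref{NotTriDense} applies to $\mG'$ with $\d = \e$ and $\eta = \rho$; its numerical side condition is precisely the inequality verified just after (\ref{another}) in the proof of Theorem \ref{TripBody}, which is $n^{o(1)} \le n$ for $n$ large. We thus obtain a copy of $K_{s,s,s}^{(3)}$ inside $\mG'$ with $s = (\log n)^{\frac12 + \d(\mH)}$ and $\d(\mH) = 1/(55t^2)$. Every triple of this $K_{s,s,s}^{(3)}$ avoids colour $c$, so its three parts give the desired complete tripartite subgraph, of order at least $(\log n)^{\frac12 + \d(\mH)}$, avoiding the colour class $c$.

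The only point that needs any thought over Theorem \ref{TripBody} is the observation that ``avoiding a colour class'' is the right $\ell$-coloured surrogate for ``empty'': a failure of tri-density in a single colour $c$ is exactly what lets one discard that colour, and in general there is no way to promote the resulting tripartite subgraph to a monochromatic one, which is why the conclusion is stated this way. The one hypothesis one must be careful to keep is that $\mH$ is complete, since Lemma \ref{Embedding} is stated only for complete coloured hypergraphs; every estimate transfers unchanged, so there is no real obstacle, and the details are omitted.
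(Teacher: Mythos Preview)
Your proposal is correct and matches the paper's intended argument: the paper itself omits the proof, stating only that it ``has essentially the same proof as Theorem \ref{TripBody}'', and you have accurately supplied those details --- namely, invoking Lemma \ref{Embedding} in its full $\ell$-coloured form and, when some colour $c$ fails tri-density, applying Lemma \ref{NotTriDense} to the union of the remaining colours to produce a $K_{s,s,s}^{(3)}$ avoiding $c$. Your observation that the constants in Lemma \ref{Embedding} are independent of $\ell$, so the numerology carries over unchanged, is the only non-trivial check, and you have made it.
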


\section{Some properties of step-up colourings} \label{StepUp}

In order to show that step-up colourings do not contain certain subgraphs, we
must first know what these colourings look like. Assume, therefore, that the
edges of the $k$-uniform hypergraph $K_n^{(k)}$ have been red/blue-coloured.
Let
$$T=\{(\gamma_1,\ldots,\gamma_n):\gamma_i=0 \mbox{ or } 1\}.$$

If $\epsilon = (\gamma_1, \cdots, \gamma_n)$, $\epsilon' =
(\gamma'_1, \cdots, \gamma'_n)$ and $\epsilon \neq \epsilon'$,
define
\[\delta(\epsilon, \epsilon') = \max\{i : \gamma_i \neq
\gamma'_i\},\] that is, $\delta(\epsilon, \epsilon')$ is the
largest coordinate at which they differ. Given this, we can define
an ordering on $T$, saying that
\[\epsilon < \epsilon' \mbox{ if } \gamma_i = 0, \gamma'_i = 1,\]
\[\epsilon' < \epsilon \mbox{ if } \gamma_i = 1, \gamma'_i = 0,\]
where $i=\delta(\epsilon,\epsilon')$. Equivalently, associate to any $\epsilon$
the number $b(\epsilon) = \sum_{i=1}^n \gamma_i 2^{i-1}$. The ordering then
says simply
that $\epsilon < \epsilon'$ if and only if $b(\epsilon) < b(\epsilon')$.

We colour the complete $(k+1)$-uniform hypergraph on the set $T$ as follows. If
$\epsilon_1 < \ldots < \epsilon_{k+1}$,  for $1 \leq i \leq k$, let $\delta_i =
\delta(\epsilon_i, \epsilon_{i+1})$. A fundamental property of step-up
colourings is that $\delta_i$ is different from $\delta_{i+1}$.

If $\delta_1,\ldots,\delta_k$ form a monotone sequence (increasing or
decreasing),
then let the colour of $\{\epsilon_1,\ldots,\epsilon_{k+1}\}$ be given by the
colour of $\{\delta_1,\ldots,\delta_k\}$.

If $\delta_1,\ldots,\delta_k$ is not monotone, it must contain a local maximum
or local minimum, that is, an $i$ for which $\delta_{i-1} < \delta_i >
\delta_{i+1}$ or $\delta_{i-1} > \delta_i < \delta_{i+1}$ respectively. Let
$\delta_j$ be the first such local extremum, that is, the one with the smallest
subscript. We colour $\{\epsilon_1, \ldots,\epsilon_{k+1}\}$ blue or red,
respectively, depending on whether the sequence $\delta_1,\ldots,\delta_k$ has
a local maximum or minimum at $\delta_j$.

This colouring is exactly that used by Erd\H{o}s and Hajnal to prove their
stepping-up lemma. We will refer to such a colouring as a step-up colouring. In
order to prove Theorem \ref{StepUpIntro},  we will show that there are certain
$2$-coloured hypergraphs $\mH$
which do not occur within step-up colourings.

\begin{lemma}\label{stpup}
For $k \geq 3$, there is a fixed $2$-coloured complete $(k+1)$-uniform
hypergraph $\mH$
which never occurs within a step-up colouring $T$.
\end{lemma}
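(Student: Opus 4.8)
The plan is to exhibit, for each $k \geq 3$, a small $2$-colouring of $K_{m}^{(k+1)}$ for some fixed $m$ that is forbidden as an induced subcolouring of any step-up colouring $T$, purely by exploiting the combinatorial constraints that the step-up rule imposes. The central structural fact to use is the one highlighted in the excerpt: if $\epsilon_1 < \cdots < \epsilon_{k+1}$ lie in $T$ and $\delta_i = \delta(\epsilon_i,\epsilon_{i+1})$, then consecutive $\delta_i$'s are distinct, so the sequence $(\delta_1,\ldots,\delta_k)$ is a sequence of distinct-from-its-neighbours integers, and the colour of the $(k+1)$-set is determined entirely by the \emph{order type} of $(\delta_1,\ldots,\delta_k)$ — specifically, by whether it is monotone (in which case the colour comes from the colour of the $k$-set $\{\delta_1,\ldots,\delta_k\}$ one level down) or, if not, by whether the first local extremum is a max (blue) or a min (red). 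The key leverage is that for many configurations the colour is \emph{forced} independently of the colouring one level down.

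First I would isolate the forced colours. Consider $k+2$ elements $\epsilon_1 < \cdots < \epsilon_{k+2}$ of $T$, giving differences $\delta_1,\ldots,\delta_{k+1}$ with $\delta_i \neq \delta_{i+1}$. Each consecutive block of $k$ differences, $(\delta_i,\ldots,\delta_{i+k-1})$, determines the colour of the corresponding $(k+1)$-subset $\{\epsilon_i,\ldots,\epsilon_{i+k}\}$. I would choose the $\epsilon$'s so that the sequence $(\delta_1,\ldots,\delta_{k+1})$ realizes a prescribed pattern — this is always possible: given any sequence of integers with no two equal neighbours, one builds $\epsilon_1,\ldots,\epsilon_{k+2}$ recursively by the rule that the binary string of $\epsilon_{j+1}$ agrees with that of $\epsilon_j$ below coordinate $\delta_j$ and is chosen freely above, picking the sign of the change at coordinate $\delta_j$ to maintain $\epsilon_j < \epsilon_{j+1}$. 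So the realizable set of difference-patterns is exactly the set of neighbour-distinct integer sequences. Now, inside such a pattern, any window of $k$ consecutive entries that is \emph{not} monotone has its colour forced by the location/type of its first local extremum — no reference to the lower-level colouring. The idea is to design a pattern, together with a specific choice of which window plays which role, so that the colours forced on the $(k+1)$-subsets are mutually contradictory with the single degree of freedom still available (the colour assigned to one monotone window, coming from a level-$k$ colour we do not control).

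Concretely, I would look for a short pattern of length, say, $k+2$ or $k+3$ in which \emph{every} length-$k$ window is non-monotone, so that \emph{all} the $(k+1)$-subsets involved receive forced colours; then $\mH$ is simply the $2$-coloured complete $(k+1)$-uniform hypergraph on those $k+2$ or $k+3$ vertices, with the colours read off from the forced values — and its being forced means it, and \emph{only} it, appears on such a difference pattern, while on \emph{any other} difference pattern at least one window disagrees, so $\mH$ never embeds. For instance, with a zig-zag pattern like $\delta_1 > \delta_2 < \delta_3 > \delta_4 < \cdots$ every window of length $\geq 2$ is non-monotone and its first extremum type alternates in a controlled way; one then checks that the resulting colour pattern on the $(k+1)$-subsets is not an induced subcolouring of the step-up colouring on \emph{any} window of \emph{any} other difference sequence, because any sequence containing this colour pattern as induced subcolouring on $k+2$ consecutive vertices would be forced to have exactly this zig-zag of $\delta$'s, and then an \emph{extra} vertex, or a shift, produces a monotone window whose step-up colour must match a level-$k$ colour, contradicting one of the forced values. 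The cleanest route may be to take $\mH$ on $k+3$ vertices so that there is redundancy: two overlapping forced windows pin down the difference pattern uniquely, and a third window then has no consistent colour.

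The main obstacle will be the bookkeeping that turns ``forced colours on all windows'' into ``never embeds.'' It is not enough that the colour pattern arises on one difference sequence; I must verify it cannot arise as an induced subcolouring sitting on any $k+2$ (or $k+3$) chosen elements $\epsilon_{i_1} < \cdots$ of \emph{any} step-up colouring whatsoever — and here the subtlety is that the chosen $\epsilon$'s need not be consecutive in $T$, so the relevant differences are $\delta(\epsilon_{i_j},\epsilon_{i_{j+1}})$, which can be \emph{any} neighbour-distinct sequence, not a sub-sequence of a fixed one. So the real claim is: the target colour pattern forces its difference pattern to be the specific zig-zag (up to the obvious symmetries), and that zig-zag, when extended by one more element or examined on a sub-window, necessarily creates a monotone window, whose colour is then unconstrained and hence can be made to disagree — \emph{but} to forbid $\mH$ I need the opposite, that every realization of the colour pattern leads to a contradiction. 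The resolution is to build $\mH$ with enough vertices that even the zig-zag realization is self-contradictory: some length-$k$ window, although non-monotone, has its first local extremum at a position whose type is dictated one way by the pattern and the opposite way by the geometry of the zig-zag. Pinning down exactly how long the zig-zag must be and confirming the contradiction for all $k \geq 3$ simultaneously is the technical heart; I would expect a case analysis on the parity of $k$ and on whether the offending window's extremum is interior or at the boundary of the chosen $(k+1)$-set.
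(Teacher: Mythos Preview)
Your approach is fundamentally different from the paper's, and as written it has a genuine gap.

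The paper uses a counting argument: it shows that the $2$-colouring induced on any $h$ vertices of a step-up colouring is completely determined by (i) the total preorder on the differences $\delta_1,\ldots,\delta_{h-1}$ and (ii) the $2$-colouring of the $k$-subsets of $\{\delta_1,\ldots,\delta_{h-1}\}$. There are at most $(h-1)^{h-1}$ such preorders and $2^{\binom{h-1}{k}}$ such lower-level colourings, so at most $(h-1)^{h-1}2^{\binom{h-1}{k}}$ distinct induced colourings arise. Since the number of (unlabelled) $2$-coloured copies of $K_h^{(k+1)}$ is at least $2^{\binom{h}{k+1}}/h!$, and for $h \geq k+5$ this exceeds the former count, some colouring is unrealizable. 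No explicit $\mH$ is ever exhibited.

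Your plan, by contrast, is to construct $\mH$ explicitly by exploiting the forced colours on non-monotone difference patterns. But the logic is circular: you propose to define $\mH$ by reading off the colours forced by a zig-zag difference sequence, and then argue that $\mH$ ``never embeds''. Any colour pattern you read off from an actual difference sequence is, by definition, \emph{realized} in a step-up colouring, not forbidden. Your attempted fix --- that ``even the zig-zag realization is self-contradictory'' --- cannot work: the step-up rule is a well-defined function from (difference pattern, lower-level colouring) to a $(k+1)$-uniform colouring, so no difference sequence produces an internal contradiction to exploit.

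To make an explicit-construction approach succeed, you would need to specify $\mH$ \emph{independently} and then verify that for every labelling of its vertices as $\epsilon_1<\cdots<\epsilon_m$, every neighbour-distinct sequence $(\delta_1,\ldots,\delta_{m-1})$, and every lower-level $k$-uniform colouring, the step-up colouring disagrees with $\mH$ on at least one edge. Note also that $\mH$ is the \emph{complete} $(k+1)$-uniform hypergraph, so colours must be assigned to all $\binom{m}{k+1}$ edges, not only the ``windows'' on consecutive vertices; for a non-consecutive $(k+1)$-set $\{\epsilon_{i_1},\ldots,\epsilon_{i_{k+1}}\}$ the relevant differences are $\delta(\epsilon_{i_j},\epsilon_{i_{j+1}})=\max_{i_j\leq l<i_{j+1}}\delta_l$, and these maxima interact in ways your sketch does not address. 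The paper's counting argument sidesteps all of this bookkeeping entirely.
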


\begin{proof}
We will in fact show that for every $h \geq k+5$ there is a $2$-coloured complete
$(k+1)$-uniform hypergraph on $h$ vertices which never occurs within a step-up
colouring. We accomplish this by a simple counting argument which shows that the
number of colourings on $h$ vertices which occur within a step-up colouring is
considerably less than the total number of colourings on $h$ vertices. We first
bound the number of colourings on $h$ vertices which occur within a step-up
colouring.

A total preorder $\leq$ is a binary relation that is transitive (i.e., for all
$x$, $y$, and $z$, if $x \leq y$ and $y \leq z$, then $x \leq z$) and total
(i.e., for all $x$ and $y$, $x \leq y$ or $y \leq x$). For example, the
elements of a sequence of integers has the total preorder $\leq$. In
particular, we can associate to each  sequence $\delta_1,\ldots,\delta_{h-1}$
its total preorder $\leq$.  The ordered Bell number $H_n$ is the number of
total preorders on a sequence of $n$ elements, and also counts the number of
ordered partitions of the set $[n]:=\{1,\ldots,n\}$. For each ordered partition
$P:[n]=I_1 \cup \ldots \cup I_t$, consider the mapping $f_P:[n] \rightarrow
[n]$ defined by, for $i \in I_j$, $f_P(i)=j$. This gives an injective mapping
from the set of ordered partitions of $[n]$ to the set of mappings from $[n]$
to $[n]$. Hence, $H_n \leq n^n$.

For a sequence $\epsilon_1<\ldots<\epsilon_h$, a useful property is that, for
$1 \leq i<j \leq h$,  we have
\begin{equation}\label{deltadefine}\delta_{i,j}:=\delta(\epsilon_i,\epsilon_j)=\max_{i
\leq l \leq j-1} \delta_{l},\end{equation}
where $\delta_{l}=\delta(\epsilon_{l},\epsilon_{l+1})$. In particular, the
${h \choose 2}$ integers $\delta_{i,j}$ are completely determined by the $h-1$
integers $\delta_i$.

We claim that the colours of the $(k+1)$-tuples of
$\epsilon_1,\ldots,\epsilon_h$ in a step-up colouring are completely determined
by the colours of the $k$-tuples of $\delta_1,\ldots,\delta_{h-1}$ and the
total preorder $\leq$ on these numbers. In particular, the colouring of the
$(k+1)$-tuples does not depend on the $\delta$s, but only on the colouring of
the edges between them and their order. To verify this claim, consider an edge
$e=(\epsilon_{i_1},\ldots,\epsilon_{i_{k+1}})$ with
$\epsilon_{i_1}<\ldots<\epsilon_{i_{k+1}}$. For $1 \leq i \leq k$, consider the
associated $k$-tuple $d=(\delta_{i_t,i_{t+1}})_{t=1}^k$, where the terms of
this sequence are defined in (\ref{deltadefine}).  If the sequence $d$ is not
monotone, then the colour of $e$ is determined by whether or not the first
local extrema of $d$ is a maxima or minima. This is determined by the order of
the elements of $d$, and it follows from (\ref{deltadefine}) that this is
determined by the total preorder $\leq$ on the $h-1$ numbers $\delta_i$.
Otherwise, $d$ is monotone, and the colour of $e$ is the colour of $d$, which
only depends on the colouring of the $k$-tuples.

The number of $2$-colourings of the $k$-tuples of a set of size $h-1$ is
$2^{h-1 \choose k}$. Hence, the  number of $2$-coloured complete
$(k+1)$-uniform hypergraphs on $h$ vertices which occur within a step-up
colouring is at most $H_{h-1}2^{h-1 \choose k} \leq (h-1)^{h-1}2^{h-1 \choose
k}:=A$.

The number of $2$-edge-coloured complete $(k+1)$-uniform hypergraphs on $h$
labeled vertices is $2^{h \choose k+1}$, and hence the number of distinct (up
to isomorphism)  $2$-edge-coloured complete $(k+1)$-uniform hypergraphs on $h$
vertices is at least $\frac{1}{h!}2^{h \choose k+1}:=B$. As for $h \geq k+5$,
we have
$$A/B=h!(h-1)^{h-1}2^{-\left(\frac{h-k-1}{k+1}\right){h-1 \choose k}}<1,$$
there is a $2$-coloured complete $(k+1)$-uniform hypergraphs on $h$ vertices
which does not appear in a step-up colouring.
\end{proof}

One can deduce Theorem \ref{StepUpIntro} from Lemma \ref{stpup} as follows.
Start with a $2$-edge-colouring of the complete $(k-1)$-uniform hypergraph on
$n$ vertices whose largest monochromatic clique is of order
$O(r_{k-1}^{-1}(n))$. Apply the stepping-up construction described above. By
the Erd\H{o}s-Hajnal result mentioned in the introduction, this gives a
$2$-edge-coloring of the complete $k$-uniform hypergraph on $N=2^n$ vertices
whose largest monochromatic clique is still of order
$O(r_{k-1}^{-1}(n))=O(r_k^{-1}(N))$. On, the other hand by Lemma \ref{stpup},
this colouring does not contain a fixed $2$-coloured complete $k$-uniform
hypergraph $\mH$.

\section{Concluding remarks} \label{Concluding}

A simple extension of the example given after Theorem \ref{TripIntro} allows
one to show that there are $k$-uniform hypergraphs $\mH$ and $\mH$-free
hypergraphs $\mG$ on $n$ vertices within which the largest complete or empty
$k$-partite subgraph, with all parts of the same order, is of order at most $c
(\log n)^{1/(k-2)}$. Indeed, this can be established by considering a random 
$(k-1)$-uniform hypergraph $\mG_0$ on the vertex set $\{1,\ldots,n\}$, and forming a $k$-uniform
hypergraph $\mG$ on the same vertex set whose edges are all $k$-tuples $(i_1,\ldots,i_k)$ with 
$i_1<\ldots<i_k$ such that $(i_1,\ldots,i_{k-1})$ is an edge of $\mG_0$. On the other hand, a standard density
argument shows that any hypergraph on $n$ vertices must contain a complete or
empty $k$-partite subgraph with parts of order $c (\log n)^{1/(k-1)}$. We make
the following conjecture.

\begin{conjecture}
Let $\mH$ be a $k$-uniform hypergraph, where $k \geq 3$. Then any $\mH$-free
$k$-uniform hypergraph on $n$ vertices contains a complete or empty $k$-partite
subgraph each part of which has order at least $(\log n)^{1/(k-2)-o(1)}$.
\end{conjecture}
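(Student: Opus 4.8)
The plan is to run the same density dichotomy that drives Theorem~\ref{TripBody}, but one uniformity at a time. First I would fix the right notion of \emph{$k$-density}: given $k$ disjoint vertex sets $V_1,\dots,V_k$ and, for each $(k-1)$-subset $S$ of $\{1,\dots,k\}$, a $(k-1)$-uniform ``scaffold'' $\mathcal{G}_S$ on the corresponding parts, one asks that whenever the number of transversal copies of $K_k^{(k-1)}$ built from the scaffolds exceeds $\e n^k$, a $\rho$-proportion of the associated $k$-tuples are edges of $\mG$. Taking $\rho=(\log n)^{-o(1)}$ and $\e$ polynomial in $\rho$, the conjecture would follow from two lemmas, exactly as before: (i) if $\mG$ and $\overline{\mG}$ are both $(\e,\rho)$-$k$-dense then $\mG$ contains an induced copy of any fixed $\mH$; and (ii) if $\mG$ fails to be $(\e,\rho)$-$k$-dense then $\mG$ contains a complete $k$-partite subgraph, or $\overline{\mG}$ does, i.e.\ $\mG$ contains a complete or empty $k$-partite subgraph with parts of order $(\log n)^{1/(k-2)-o(1)}$.

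For (ii) I would mimic Lemma~\ref{NotTriDense}. A failure of $k$-density yields scaffolds carrying $\gtrsim\e n^k$ transversal $(k-1)$-cliques but with the $\mG$-density on those cliques pushed to $1-\eta$ (or, after complementation, to $\eta$) for $\eta$ tiny. One then runs a cleanup in $k-2$ phases: in each phase localise to the ``good, high-degree'' part of one scaffold, apply the K\H{o}v\'ari--S\'os--Tur\'an-type extraction of Lemmas~\ref{Zarank} and~\ref{Zarank2} to pull out a large complete slab in one more coordinate direction, and restrict; after the $(k-2)$nd phase one is left with a genuinely bipartite obstruction inside which the final $K_{s,\dots,s}^{(k)}$ is found by one more application of Lemma~\ref{Zarank}. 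Since each phase roughly halves a surviving polynomial-size set and each extraction step costs only a $\log(1/\e)$ factor in the exponent, the count budget is of the shape $s\cdot(k-2)\cdot\log(1/\e)\lesssim\log n$; with $\log(1/\e)=(\log n)^{o(1)}$ this gives the target exponent $1/(k-2)-o(1)$ — \emph{provided} the cleanup can be arranged so that no phase incurs a loss of the form $(1-\eta)^{s^c}$ with $c>1$.

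The first main obstacle is the embedding lemma (i), and it is exactly the reason the proof of Theorem~\ref{TripBody} does not extend to $k\geq 4$. Embedding $\mH$ one vertex at a time forces one to restrict each scaffold $\mathcal{G}_S$ to the link of the just-embedded vertex, and for the induction to continue that link must still be quasirandom; but for $(k-1)$-uniform scaffolds with $k-1\geq 3$, ``quasirandom'' is itself a hierarchical notion: controlling a $(k-1)$-uniform hypergraph requires control of the $(k-2)$-uniform scaffolds beneath it, and so on down, in the style of hypergraph regularity. One therefore needs a tower of density parameters $(\e_1,\rho_1),\dots,(\e_{k-1},\rho_{k-1})$ that is simultaneously weak enough that a violation at the top level already produces the large complete $k$-partite subgraph demanded by (ii), and robust enough that it is inherited, with only polynomial loss, by the links that appear during the embedding. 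Producing such a self-improving hierarchy — or circumventing it entirely — is the crux; in the $k=3$ case it is free because the scaffolds are ordinary graphs and bi-density has no level below it.

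The second obstacle is quantitative and is already present for $k=3$: the cleanup in Lemma~\ref{NotTriDense} loses a factor $(1-4\eta)^{s^2}$ (from applying a density-$\e_0$ extraction, where $\e_0$ itself already carries a $(1-4\eta)^{s}$ factor, to pull out $s$ further vertices), and it is precisely this $\eta s^2$ term that caps the exponent at $\tfrac12+\delta$ rather than $1-o(1)$. Removing it — for instance by extracting the final complete $k$-partite subgraph more globally rather than coordinate by coordinate, or by a cleverer choice of which scaffold to clean up so that an $\eta$ is lost only $O(1)$ times rather than $\Theta(s)$ times — appears to require a genuinely new idea, and this is why the statement remains a conjecture even when $k=3$.
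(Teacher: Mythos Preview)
The statement is a conjecture; the paper does not prove it, so there is no proof to compare against. Your proposal is a strategy together with an assessment of the obstacles, which is the right mode of response here. But your diagnosis of \emph{where} the argument breaks is the reverse of the paper's own.

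In the concluding remarks the authors state explicitly that ``the notion of tri-density and the embedding lemma, Lemma~\ref{Embedding}, both extend to the $k$-uniform case,'' and that ``the real stumbling block in extending our result to the $k$-uniform case is Lemma~\ref{NotTriDense}, which does not seem to generalise easily.'' You assert the opposite: that the embedding lemma is ``the first main obstacle'' and ``exactly the reason the proof of Theorem~\ref{TripBody} does not extend to $k\geq 4$,'' while for the analogue of Lemma~\ref{NotTriDense} you offer a fairly confident $(k-2)$-phase K\H{o}v\'ari--S\'os--Tur\'an cleanup. Your worry about a hierarchy of quasirandomness parameters for the scaffolds is a reasonable instinct, but the authors regard the greedy embedding of Lemma~\ref{Embedding} as adaptable to $(k-1)$-uniform scaffolds (with hypergraph regularity and counting available as a fallback if one is indifferent to constants).

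The sketch you give for part~(ii) is where the genuine gap lies. The iterative density-boosting in Lemma~\ref{NotTriDense} works because the scaffolds $G_{ij}$ are \emph{graphs}: one can repeatedly pass to a complete bipartite slab via Lemmas~\ref{Zarank} and~\ref{Zarank2} and retain a positive fraction of the triangles. For $k\geq 4$ the scaffolds are $(k-1)$-uniform with $k-1\geq 3$, and the step ``pull out a large complete slab in one more coordinate direction'' asks for a complete $(k-1)$-partite $(k-1)$-uniform subhypergraph inside a merely dense one --- which is the very Ramsey-type problem one level down, and no polynomial-sized such structure need exist. Your phase-by-phase extraction therefore has no analogue of Lemma~\ref{Zarank} to call at each phase, and this, not the embedding side, is what the paper identifies as the obstruction.

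Your second obstacle --- the $(1-4\eta)^{s^2}$ loss in Lemma~\ref{NotTriDense} --- is correctly identified, and it is precisely why even for $k=3$ the paper obtains only exponent $\tfrac12+\delta(\mH)$ rather than the conjectured $1-o(1)$.
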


It is possible to show that the notion of tri-density and the embedding lemma,
Lemma \ref{Embedding}, both extend to the $k$-uniform case. Indeed, if one is
not particularly concerned with bounds, an application of hypergraph regularity
and counting immediately allows one to prove such an embedding lemma. The real
stumbling block in extending our result to the $k$-uniform case is Lemma
\ref{NotTriDense}, which does not seem to generalise easily. Understanding and
resolving this difficulty could also be useful in extending the results of
\cite{CFS1102}, on finding almost monochromatic subsets within colourings of
$K_n^{(3)}$, to the $k$-uniform case.

Another direction which one could take would be to consider random $\mH$-free
hypergraphs. In a recent paper, Loebl, Reed, Scott, Thomason and Thomass\'e
\cite{LRSTT10} showed that almost every $H$-free graph on $n$ vertices contains
a clique or independent set of size $n^{\d(H)}$. Perhaps this result could be
extended to hypergraphs in some interesting fashion.

Finally, it would be of great interest to show that Theorem \ref{StepUpIntro}
also holds for $3$-uniform hypergraphs. It seems likely that in order to do
this one must first resolve the central Ramsey problem for $3$-uniform
hypergraphs. That is, one would need to show that there are $2$-colourings of
the edges of $K_n^{(3)}$ which contain no cliques or independent sets of size
$c \log \log n$. Such a colouring, if it does exist, is likely to avoid some
class of subgraphs.

\vspace{0.1cm} \noindent {\bf Acknowledgments.}\, Credit and thanks are due to
Vojta R\"odl and Mathias Schacht, who brought the question regarding tripartite subgraphs of
$\mH$-free hypergraphs to our attention.

\end{document}